\colorlet{darkgreen}{green!40!black}
\ifpdf \usepackage[pdftex]{hyperref}
\else \usepackage[ps2pdf]{hyperref} 
\newcommand{\Z}{\mathbb{Z}}\newcommand{\Q}{\mathbb{Q}}\newcommand{\C}{\mathbb{C}}
\newcommand{\e}{\varepsilon}
\newcommand{\Ker}{\textnormal{Ker}}
\newcommand{\tn}[1]{\textnormal{#1}}
\renewcommand{\sp}{\supseteq}
\newcommand{\ra}{\rightarrow}
\newcommand{\MTU}{\mathbf{MTU}}\newcommand{\MU}{\mathbf{MU}}\newcommand{\Ext}{\mathbf{Ext}}\newcommand{\Hom}{\mathbf{Hom}}
\newtheorem{remark}{Remark}[section]
\newtheorem{definition}{Definition}[section]
\newtheorem{theorem}{Theorem}[section]
\newtheorem{proposition}[theorem]{Proposition}
\newtheorem{corollary}[theorem]{Corollary}
\newtheorem{lemma}[theorem]{Lemma}
\begin{document}
\title{Cobordism Obstructions to Complex Sections II: Torsion Obstructions}
\author{Dennis Nguyen}
\email{dpn@uoregon.edu}
\address{ Department of Mathematics, University of Oregon, Fenton Hall, 1021 E 13th Ave, Eugene, OR, 97403 }

\keywords{complex cobordism, complex sections, spectra, cobordism category.}

\begin{abstract}
	
In the previous paper, we studied obstructions to the existence of complex sections on almost complex manifolds up to cobordism. We determined the obstruction rationally, in terms of the Chern classes.  In this paper, we study the torsion obstructions, that is, the obstructions which vanish after tensoring with $\Q$ or multiplication by an integer. Calculations with the Adams-Novikov spectral sequence for the Thom spectra $\MTU(d)$ allow us to show the torsion obstructions for low $r$. For prime $p\geq 3$, we show that torsion obstructions for finding $r$ complex sections of order $p$ vanish for $r<p^2-p$. 

\end{abstract}
\maketitle
\section{Introduction}

There is a classical problem to determine whether a manifold admits $r$
linearly independent tangent vector fields. In the work of B\"okstedt, Dupont and Svane \cite{bokstedt} and in the previous paper \cite{nguyen1}, we approached this problem by instead determining the obstruction to finding a cobordant manifold with $r$ vector fields. We were able to describe the rational obstruction to $r$ linearly independent complex sections of the tangent bundle of an almost complex manifold up to cobordism. This obstruction is given in terms of Chern characteristic numbers as follows:

\begin{theorem}\label{ratobs}
	Let $M^{2d}$ be a $d$-dimensional almost-complex manifold. The Chern characteristic classes $s_\omega(M^{2d})=0$ for all partitions $\omega$ of length greater than $d-r$ if and only if  there exists integer $c$ such that the cobordism class $c[M^{2d}]$ contains a manifold $N^{2d}$ with $r$ complex sections on $TN$.
\end{theorem}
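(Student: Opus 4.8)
The plan is to identify the set of complex cobordism classes that contain a manifold with $r$ complex sections with a concrete subspace of $\Omega_{2d}^U\otimes\Q$ cut out by Chern numbers, and then clear denominators. I would first dispatch the ``only if'' direction, which is the elementary one. Suppose $c\ne 0$ and $c[M^{2d}]=[N^{2d}]$ in $\Omega_{2d}^U$, where $TN\cong\eta\oplus\underline{\C}^{r}$ for a complex bundle $\eta$ of rank $d-r$. Writing $s_\omega$ for the characteristic class associated with the monomial symmetric function $m_\omega$ in the Chern roots, a trivial summand contributes no Chern roots, so $s_\omega(TN)=s_\omega(\eta)$; and $m_\omega$ in only $d-r$ variables vanishes identically once the length $\ell(\omega)$ exceeds $d-r$. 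Hence the Chern number $s_\omega(N^{2d})=\langle s_\omega(TN),[N]\rangle$ is zero for every $\omega$ of length at least $d-r+1$, and cobordism invariance and additivity of Chern numbers give $c\cdot s_\omega(M^{2d})=s_\omega(N^{2d})=0$, so $s_\omega(M^{2d})=0$ for those $\omega$ because $c\ne 0$.

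For the converse, let $W_{d,r}\subseteq\Omega_{2d}^U$ be the subgroup of classes containing a representative with $r$ complex tangent sections, and let $K_{d,r}\subseteq\Omega_{2d}^U\otimes\Q$ be the common kernel of the functionals $s_\omega$ with $\ell(\omega)\ge d-r+1$. The first paragraph shows $W_{d,r}\otimes\Q\subseteq K_{d,r}$, and equality is all that remains: given it, a class $[M]$ with $s_\omega(M^{2d})=0$ for all $\ell(\omega)\ge d-r+1$ lies in $K_{d,r}\cap\Omega_{2d}^U$, a lattice in which the full rank subgroup $W_{d,r}$ has finite index, so a nonzero multiple $c[M]$ lies in $W_{d,r}$, that is, $c[M]$ contains a manifold with $r$ complex sections. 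To prove $K_{d,r}\subseteq W_{d,r}\otimes\Q$ I would use the Thom spectrum $\MTU_r$ of \cite{nguyen1}: the bordism of almost complex $2d$--manifolds equipped with $r$ complex tangent sections is computed by $\pi_{2d}\MTU_r$, and $W_{d,r}$ is the image of the forgetful map $\pi_{2d}\MTU_r\to\pi_{2d}\MU=\Omega_{2d}^U$. Rationally $\pi_*\to H_*(-;\Q)$ is an isomorphism of spectra, so this map is governed by rational homology; via the Thom isomorphism the relevant group is the rational homology of the classifying space $BU(d-r)$ of the $(d-r)$--dimensional structure, whose degree $2d$ cohomology has basis the $s_\omega$ with $\omega\vdash d$ and $\ell(\omega)\le d-r$. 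On the one hand the functionals $s_\omega$ with $\ell(\omega)\ge d-r+1$ vanish on $W_{d,r}\otimes\Q$ by the first paragraph; on the other hand, the rational homotopy of $\MTU_r$ realizes every $s_\omega$ of length at most $d-r$ by a manifold with $r$ sections, this being where the homology computation is really used. Hence the image is exactly $K_{d,r}$. (One could instead induct on $r$ through the cofiber sequences relating $\MTU_r$, $\MTU_{r-1}$ and the Thom spectrum produced by the Stiefel fibration that drops one section, with $\MTU_0\to\MU$ as the base case; rationally each connecting map is detected exactly by the $s_\omega$ of length $d-r+1$.)

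The step I expect to be genuinely delicate is importing and applying correctly the Pontryagin--Thom description of \cite{nguyen1}, because the tangential structure here is \emph{unstable} --- a reduction of the rank--$d$ bundle $TN$, not of the stable tangent bundle --- so the identification of $\pi_{2d}\MTU_r$ with this bordism group and the bookkeeping of degree shifts in the comparison map $\MTU_r\to\MU$ need care. Everything else is symmetric--function combinatorics, namely that an $s_\omega$ is a polynomial in $c_1,\dots,c_{d-r}$ alone exactly when $\ell(\omega)\le d-r$, together with the routine passage from the rational span back to a single manifold representing a nonzero multiple. If one wanted to avoid the spectra altogether, the obstruction would instead be the explicit construction, in each complex dimension $m$, of an almost complex manifold whose tangent bundle is a line bundle plus a trivial bundle and whose top Chern number $s_m$ is nonzero; the Thom--spectrum argument produces such cobordism classes only indirectly, which is exactly why it is the natural route here.
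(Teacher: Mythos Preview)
Your approach is correct and matches the paper's: this theorem is not proved in the present paper but quoted from \cite{nguyen1}, and the surrounding framework (Theorem~2.4 identifying $\pi_{2d}(\MTU(d-r))$ with the section-bordism group, Proposition~2.3 computing $H_*(\MTU(d))\hookrightarrow H_*(\MU)$ as monomials of degree $\le d$, and the restatement as Theorem~2.5) makes clear that the argument is exactly the one you give --- compute the image of the forgetful map rationally via the Thom isomorphism and read off which $s_\omega$ vanish. One notational point: what you call $\MTU_r$ is the paper's $\MTU(d-r)$, the Thom spectrum of $U^\perp$ over $BU(d-r)$.
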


The length of a partition is the number of summands in the partition. For any cobordism class $[M^{2d}]$ with vanishing rational obstruction, we can ask whether there are any divisibility conditions for the constant $c$ in the above theorem. If, for all $[M^{2d}]\in \Omega_{2d}^U$ with vanishing rational obstruction, the constant $c$ may be chosen to be $1$ mod $p$ for prime $p$, we say that the $p$ torsion obstruction vanishes. This is equivalent to a particular map having no $p$-torsion in its image, see Theorem 4.6 in \cite{nguyen1}. We will show in this paper, Proposition \ref{diff}, that $p$ torsion obstructions correspond to non-vanishing differentials in the $p$-primary Adams-Novikov spectral sequence for $\MTU(d-r)$. The prime $p>2$ case is tackled first:

\begin{theorem}[Theorem \ref{torsobsbody}]\label{torsobs}
	For $p>2$, the $p$-torsion obstruction to the existence of $r$ complex sections of the tangent bundle of an almost complex manifold vanishes when $r<p^2-p$.
\end{theorem}

We can further show that the $2$-torsion obstruction vanishes for $r=2$ or $r=3$ complex sections. Thus we can prove the main result of the paper:

\begin{theorem}[Theorem \ref{3vf2}]\label{3vf}
		Let $M^{2d}$ be a $d$-dimensional almost-complex manifold with $d\geq 6$. There is a manifold $N^{2d}\in [M^{2d}]$ with $2$ complex sections of $TN$ if and only if $\chi([M^{2d}])=0$ and $s_{2,1,...,1}([M^{2d}])=0$. 
		
		There is a manifold $N^{2d}\in [M^{2d}]$ with $3$ complex sections of $TN$ if and only if $s_{2,2,1...,1}([M^{2d}])=0$, $s_{3,1,...,1}([M^{2d}])=0$, $s_{2,1,...,1}([M^{2d}])=0$, and $\chi([M^{2d}])=0$. 
\end{theorem}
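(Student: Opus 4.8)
The plan is to combine the rational obstruction of Theorem~\ref{ratobs} with a prime-by-prime analysis of the torsion obstruction. First I would unwind Theorem~\ref{ratobs} for $r=2$ and $r=3$. The partitions $\omega$ of $d$ with $\ell(\omega)\geq d-1$ are exactly $(1^{d})$ and $(2,1^{d-2})$, and since the monomial symmetric polynomial $m_{(1^{d})}$ is the elementary symmetric polynomial $e_{d}$, the characteristic number $s_{(1^{d})}[M^{2d}]$ equals $c_{d}[M^{2d}]=\chi([M^{2d}])$; so the rational obstruction to two complex sections is precisely $\chi([M^{2d}])=0$ together with $s_{2,1,\dots,1}([M^{2d}])=0$. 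Likewise the partitions of $d$ with $\ell(\omega)\geq d-2$ are $(1^{d})$, $(2,1^{d-2})$, $(3,1^{d-3})$ and $(2,2,1^{d-4})$, four distinct partitions once $d\geq 6$, and these give the four conditions stated for three complex sections.

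Necessity is straightforward: if $N\in[M^{2d}]$ carries $r$ linearly independent complex sections then $TN\cong\underline{\C}^{\,r}\oplus\xi$ with $\xi$ complex of rank $d-r$, so expanding $s_{\omega}$ in Chern roots the trivial summand contributes $r$ vanishing roots and every monomial of $m_{\omega}$ with $\ell(\omega)>d-r$ acquires a zero factor; hence $s_{\omega}(TN)=0$ whenever $\ell(\omega)\geq d-r+1$, and since the $s_{\omega}$ are cobordism invariants all the listed numbers of $[M^{2d}]$ vanish.

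For sufficiency, suppose these numbers vanish. By Theorem~\ref{ratobs} there is an integer $c$ and a manifold $N^{2d}\in c[M^{2d}]$ with $r$ complex sections, and the task is to arrange $c=1$. By Theorem~4.6 of \cite{nguyen1} together with Proposition~\ref{diff}, this is possible provided that for every prime $p$ the relevant differential in the $p$-primary Adams--Novikov spectral sequence of $\MTU(d-r)$ vanishes. For odd $p$ one has $r\leq 3<6\leq p^{2}-p$, so Theorem~\ref{torsobs} handles all odd primes simultaneously, and only $p=2$ remains, for $r=2$ and $r=3$.

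The prime $2$ is the crux, and the place where Theorem~\ref{torsobs} provides no room. Here I would compute the $2$-primary Adams--Novikov $E_{2}$-page of $\MTU(d-2)$, respectively $\MTU(d-3)$ --- that is, the relevant $\mathrm{Ext}$ group over $BP_{*}BP$ --- in the narrow band of bidegrees that could support the obstruction differential, using the explicit description of $H_{*}\MTU$ from \cite{nguyen1}, and show that in each case the source or the target of the potential differential vanishes for sparseness and degree reasons; the hypothesis $d\geq 6$ is what guarantees $d-r\geq 3$, so that the relevant low-codimensional structure of $\MTU(d-r)$ is in force. Once this is done, $c\equiv 1\pmod p$ for all primes $p$ forces $c=1$, so $[M^{2d}]$ itself contains a manifold with $r$ complex sections. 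I expect this $p=2$ Adams--Novikov calculation to be the main obstacle: unlike the odd primes, which are covered uniformly by Theorem~\ref{torsobs}, at the prime $2$ with $r=2,3$ the obstruction differential must be excluded by an explicit, if low-dimensional, computation.
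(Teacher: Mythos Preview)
Your overall strategy matches the paper's exactly: identify the rational obstruction via Theorem~\ref{ratobs} as the stated list of Chern numbers, dispose of all odd-prime torsion obstructions uniformly with Theorem~\ref{torsobs} (since $r\leq 3<p^2-p$ for every odd $p$), and then handle $p=2$ by an explicit Adams--Novikov computation for $\MTU(d-r)$, equivalently (via Proposition~\ref{shift}) for $\overline{\MTU}(d-r)$. The necessity argument and the unwinding of which partitions are relevant are also in line with the paper.

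The gap is in your expectation for the $2$-primary step. The paper carries out the cobar computation in Theorem~\ref{homgroups} and finds that for $r=3$ the target of the potential obstruction is free abelian, and for $r=2$ with $d$ odd it vanishes; in those cases your sketch is accurate. But for $r=2$ with $d$ \emph{even}, the computation produces a nonzero $\Z/2$ in the relevant bidegree (an $h_1^3$ class in filtration $3$ of the ANSS for $\MTU(d-2)$), so neither source nor target vanishes ``for sparseness and degree reasons'' as you predict. The paper closes this case with an external input you did not anticipate: by \cite{KashiwabaraZare}, the sphere $S^{\infty+2(d-2)}$ splits off $\MTU(d-2)$ when $d-2$ is even, and the $\Z/2$ target lies entirely in this split sphere summand. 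A class coming from $\pi_{2d}(\MU)$ lifts to the complementary summand, so it cannot support a differential hitting $h_1^3$. Without invoking this splitting, the argument you outline stalls at $r=2$, $d$ even.
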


In the case $d$ is odd, we can go one step further:

\begin{theorem}[Theorem \ref{4vf2}]\label{4vfodd}
	If $d>6$ is odd, there is a manifold $N^{2d}\in [M^{2d}]$ with $4$ complex sections of $TN$ if and only if $s_{4,1...,1}([M^{2d}])=0$, $s_{3,2,1...,1}([M^{2d}])=0$, $s_{2,2,2,...,1}([M^{2d}])=0$, $s_{2,2,1...,1}([M^{2d}])=0$, $s_{3,1,...,1}([M^{2d}])=0$, $s_{2,1,...,1}([M^{2d}])=0$, and $\chi([M^{2d}])=0$. 
\end{theorem}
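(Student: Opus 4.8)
The plan is to combine the rational criterion of Theorem~\ref{ratobs} with the vanishing of the $p$-torsion obstruction to $4$ complex sections for every prime $p$; this vanishing is automatic at odd primes, and only at $p=2$ does the hypothesis ``$d$ odd'' enter. First I would check that the seven characteristic numbers in the statement are exactly the $s_\omega([M^{2d}])$ for partitions $\omega$ of $d$ of length $\geq d-3=d-r+1$ with $r=4$: the length-$d$ partition $(1^d)$ contributes $\chi$, length $d-1$ contributes $s_{2,1,\dots,1}$, length $d-2$ contributes $s_{3,1,\dots,1}$ and $s_{2,2,1,\dots,1}$, and length $d-3$ contributes $s_{4,1,\dots,1}$, $s_{3,2,1,\dots,1}$ and $s_{2,2,2,1,\dots,1}$; the hypothesis $d>6$ makes this list complete and ensures the last partition genuinely has length $d-3$. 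By Theorem~\ref{ratobs} the simultaneous vanishing of these numbers is equivalent to the existence of $c\in\Z$ with $c[M^{2d}]$ carried by a manifold with $4$ complex sections; the set of such $c$ is a subgroup $m\Z\subseteq\Z$ with $m\geq 1$, so it suffices to prove $p\nmid m$ for every prime $p$---that is, by the discussion before Theorem~\ref{torsobs}, that the $p$-torsion obstruction to $4$ complex sections vanishes for all $p$. This forces $m=1$, hence $[M^{2d}]$ itself is carried by such a manifold, and the equivalence follows.

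For odd primes there is nothing to do: $p\geq 3$ gives $p^2-p\geq 6>4=r$, so Theorem~\ref{torsobs} already yields the vanishing of the $p$-torsion obstruction, with no hypothesis on $d$. The whole content is therefore at $p=2$, and here I would invoke Proposition~\ref{diff}: a nonzero $2$-torsion contribution corresponds to a non-vanishing differential in the $2$-primary Adams--Novikov spectral sequence for $\MTU(d-4)$, emanating in total degree $2d$ from the class $\tilde x\in E_2^{0,2d}$ lifting $[M^{2d}]$ (which sits in Adams--Novikov filtration $0$, being a lift of $[M^{2d}]\in\pi_{2d}\MU$). Since $\MTU(d-4)$ has cells only in even dimensions, $BP_*\MTU(d-4)$ is an even $BP_*BP$-comodule, so $\Ext^{s,t}_{BP_*BP}(BP_*,BP_*\MTU(d-4))$ vanishes for odd $t$. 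A differential out of $\tilde x$ lands in $E^{s,\,s+2d-1}$ with odd cohomological degree $s\geq 3$, i.e.\ in the $(2d-1)$-stem above filtration $1$; so it suffices to prove that, for $d$ odd, the $(2d-1)$-stem of the Adams--Novikov $E_2$-term of $\MTU(d-4)$ is concentrated in cohomological degree $\leq 1$. By the Adams--Novikov vanishing line this is a finite check, the first and most delicate case being $\Ext^{3,\,2d+2}=0$.

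To carry this out I would describe $H_*(\MTU(d-4);\Z/2)$ as a comodule over the dual Steenrod algebra using the Thom isomorphism for the Thom spectrum of \cite{nguyen1} and the Wu-formula action of the Steenrod squares on $H^*(BU)$; the underlying $BP_*$-module is free (even, torsion-free homology), so the $\Ext$-groups depend only on this comodule, and their low-filtration part is accessible through the algebraic chromatic / $v_n$-Bockstein machinery, which reduces everything to a finite computation with the mod-$2$ cohomology of $\MTU(d-4)$ near internal degree $2d$. In this range the comodule is built from only finitely many monomials in the Chern classes times the Thom class, so the computation is explicit; the key qualitative point---paralleling the $r=2,3$ computations behind Theorem~\ref{3vf}---is that the generator which populates $\Ext^{3,\,2d+2}$ for $d$ even sits in an odd-parity stratum of $H_*(\MTU(d-4))$ and is therefore absent precisely when $d$ is odd (and the inequality $d-4\geq 3$, i.e.\ $d>6$, is what places us in the stable range where this pattern holds). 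Once the $(2d-1)$-stem is known to lie in filtration $\leq 1$ for $d$ odd, $\tilde x$ supports no differential, so it is a permanent cycle; combined with the odd-primary case and the reduction of the first paragraph, this proves the theorem.

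The reductions via Theorems~\ref{ratobs} and~\ref{torsobs} and Proposition~\ref{diff} are formal; the real difficulty is the $p=2$ computation for $\MTU(d-4)$. At $p=2$ there is much less room than at odd primes, so one cannot dispose of the potential $d_3$ (or the higher differentials allowed by the vanishing line) by sparsity of the $\alpha$-family alone: the argument has to use the precise attaching-map data of $\MTU(d-4)$, most conveniently organized via the standard cofibre sequence relating $\MTU(d-4)$ and $\MTU(d-3)$ (whose cofibre is a Thom spectrum over a lower piece of $BU$), and one must then track how the parity of $d$ propagates through that sequence to annihilate exactly the offending $\Ext$-classes in the $(2d-1)$-stem. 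Controlling these few groups precisely enough to rule out both an honest differential and any exotic higher differential in this narrow band is where the work lies.
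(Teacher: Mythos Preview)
Your reductions are correct: the seven Chern numbers are exactly the $s_\omega$ with $\ell(\omega)\geq d-3$, the odd-prime case follows from Theorem~\ref{torsobs} since $4<p^2-p$ for $p\geq 3$, and Proposition~\ref{diff} does convert a $2$-torsion obstruction into a nontrivial differential in the Adams--Novikov spectral sequence for $\MTU(d-4)$ emanating from $E_2^{0,2d}$. However, your plan for the $p=2$ step diverges from the paper and is where the argument is incomplete.

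The paper does not compute $\Ext^{3,2d+2}_{BP_*BP}(BP_*,BP_*\MTU(d-4))$ directly. Instead it uses the cofibration $\MTU(d-3,1)\to\overline{\MTU}(d-4)\to\overline{\MTU}(d-3)$ together with the already-proven vanishing of the torsion obstruction to $3$ sections (Theorem~\ref{3vf}) to lift the obstruction to $\pi_{2d}(\MTU(d-3,1))$. By Theorem~\ref{MTUd1} this group splits as $\pi_6^s(S^0)\oplus\pi_6^s(BU(d-3))$; the second summand is torsion-free, so the only candidate torsion obstruction is $h_2^2\in\pi_6^s(S^0)$. The paper then kills this via the $\pi_*^s$-module structure: the image of $h_2\cdot\iota_{2d-6}$ lies in $\pi_{2d-3}(\overline{\MTU}(d-4))$, and the Adams--Novikov computation of Theorem~\ref{homgroups} shows that for $d-4$ odd the $E_2$-page of $\overline{\MTU}(d-4)$ is trivial above filtration $0$ in this range, forcing $\pi_{2d-3}(\overline{\MTU}(d-4))=0$. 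Hence $h_2^2\mapsto h_2\cdot 0=0$.

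Your route, by contrast, asks for the direct vanishing of $E_2^{s,2d+s-1}(\MTU(d-4))$ for all $s\geq 3$. That is \emph{not} what Theorem~\ref{homgroups} provides: its computations of $E_2^{s,t}(\overline{\MTU}(d'))$ reach only $t\leq 2d'+8$, whereas (via Proposition~\ref{shift}) you need $t=2d'+10$ and higher. More seriously, your outline for carrying out this computation is confused: you propose to describe $H_*(\MTU(d-4);\Z/2)$ as a comodule over the dual Steenrod algebra, but that is the input to the classical Adams spectral sequence, not the Adams--Novikov spectral sequence, whose $E_2$ is $\Ext$ over $BP_*BP$ of the $BP$-homology. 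The chromatic/$v_n$-Bockstein remark does not bridge this gap. So as written the $p=2$ step is only a plan for a plan, and the key vanishing is not established. The paper's approach via $\MTU(d-3,1)$ and the $\pi_*^s$-module structure sidesteps all of this: it reduces the question to the single fact $\pi_{2d-3}(\overline{\MTU}(d-4))=0$ for $d$ odd, which lies inside the range already handled by Theorem~\ref{homgroups}.
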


{\bf{Acknowledgments:}} I would like to thank all the people who have helped me as I have completed this paper. First, I would like to thank the University of Oregon Department of Mathematics for supporting me during this work. I most especially thank my advisor, Boris Botvinnik, for the time he spent guiding and advising me throughout this project. I also thank Douglas Ravenel for his feedback and suggestions.

\section{Construction of Spectra}
\subsection{The spectra MTU(d) and MTU(d,r)}
Here we introduce the spectra $\MTU(d)$ and $\MTU(d,r)$. The homotopy groups of these spectra will be the natural objects of study. The real case is described in \cite{bokstedt} and \cite{galatius}. Most of this work is repeated from our previous paper \cite{nguyen1}. It should be noted that there is some disagreement about the proper indexing of these spectra between \cite{bokstedt} and \cite{galatius}; we follow the convention given in B\"okstedt, Dupont and Svane. \cite{bokstedt} Write $G_\C(d,n)$ for the complex Grassmannian of $d$ dimensional complex subspaces of
$\C^{d+n}$. Let $U_{\C, d,n}\ra G_\C(d,n)$ be the tautological $d$
dimensional complex vector bundle and let $U_{\C, d,n}^\perp \ra
G_\C(d,n)$ be the $n$ dimensional orthogonal complement of $U_{\C, d,n}$. 
\begin{definition}
	Define $\MTU(d)$ to be the spectrum whose $2n$-th
	space is $\MTU(d)_{2n}= Th(U_{\C, d,n}^\perp)$.
\end{definition}
There exists a
canonical map $G_\C(d,n)\ra G_\C(d,n+1)$ defined by the
composition: \[\C^d\ra \C^{d+n} \hookrightarrow \C \oplus \C^{d+n} \cong
\C^{d+n+1}\]
The restriction of the bundle $U_{\C, d,n+1}^\perp$ to
$G_\C(d,n)$ under this map is $U_{\C, d,n}^\perp\oplus \C $. (Here $\C$ is a one dimensional complex trivial bundle.) In other words, there is a bundle map $U_{\C, d,n}^\perp\oplus \C\ra U_{\C,
	d,n+1}^\perp$ covering the map $G(d,n)\ra G(d,n+1)$. This map
induces a map of Thom spaces and the following composition gives the spectrum map $\Sigma^2 \MTU(d)_{2n}\ra \MTU(d)_{2n+2}$: 
\[\Sigma^2(Th(U_{\C,d,n}^\perp))\cong S^2\wedge Th(U_{\C, d,n}^\perp) \cong Th(\C \oplus
U_{\C, d,n}^\perp) \ra Th(U_{\C, d,n+1}^\perp)\]
There is also a map $G(d-r,n)\ra G(d,n)$ which takes a $(d-r)$-dimensional complex plane $P\subset \C^{d-r+n}$ to the $d$-plane $P\oplus \C^r \subset
\C^{d-r+n} \oplus \C^r$. Under this map, the pullback of $U_{\C, d,n}^\perp$
is $U_{\C, d-r,n}^\perp$. The pullback induces canonical maps of Thom spaces
$\MTU(d-r)_{2n}\ra \MTU(d)_{2n}$. Since this map commutes with the
spectrum map, it defines a map of spectra
$\MTU(d-r)\ra \MTU(d)$.
\begin{definition}
	Let $\MTU(d,r)$ be the cofiber of the map $\MTU(d-r)\ra \MTU(d)$.
\end{definition}
We note that, having defined $\MTU(d,r)$, we immediately get a cofibration for $k\leq d-r$.
	\[\MTU(d-r,k)\ra \MTU(d,r+k) \ra \MTU(d,r)\]
This cofibration reduces to the definition when $k=d-r$. There is a sequence of spectra:
\[ \MTU(d)\ra \MTU(d+1) \ra \MTU(d+2)\ra ...\]
Let us call the colimit of this spectrum $\MTU$; this spectrum is the Thom spectrum of the bundle $U^\perp_\C$ over $BU$. There is an inversion map $BU\ra BU$ which takes a vector space to its complement. This map is covered by a bundle map $U^\perp_\C\ra U_\C$ which defines a homotopy equivalence $\MTU\ra \MU$. See \cite{ebert} for the real case of this equivalence. In particular, there is a well defined map $\MTU(d)\ra \MU$. 
\begin{definition}
	Let $\overline{\MTU}(d)$ be the cofiber of the map $\MTU(d)\ra \MU$.
\end{definition}

There is an alternative interpretation of $\overline{\MTU}(d)$ as the colimit of the sequence of spectra: \[ \MTU(d+1,1)\ra \MTU(d+2,2) \ra ...\]
The following commutative diagram relates these two constructions and shows that they are homotopy equivalent.
\begin{center}
	
	\begin{tikzpicture}
		\node (v1) at (-3,0.5) {$\MTU(d)$};
		\node (v2) at (0,0.5) {$\MTU(d+1)$};
		\node (v3) at (3,0.5) {$\MTU(d+1,1)$};
		\draw [->] (v1) edge (v2);
		\draw [->] (v2) edge (v3);
		\node (v5) at (0,1.5) {$\MTU(d+2)$};
		\node (v9) at (3,1.5) {$\MTU(d+2,2)$};
		\node (v4) at (0,2.5) {\vdots};
		\node (v8) at (3,2.5) {\vdots};
		
		\node (v6) at (0,3.5) {$\MU$};
		\node (v7) at (3,3.5) {$\overline{\MTU}(d)$};
		\draw [->] (v1) edge (v6);
		\draw [->] (v1) edge (v5);
		\draw [->] (v6) edge (v7);
		\draw [->] (v8) edge (v7);
		\draw [->] (v9) edge (v8);
		\draw [->] (v5) edge (v4);
		\draw [->] (v4) edge (v6);
		\draw [->] (v3) edge (v9);
		\draw [->] (v2) edge (v5);
		\draw [->] (v5) edge (v9);
	\end{tikzpicture}
\end{center}
\begin{proposition}
	There is a cofibration $\MTU(d,r) \ra \overline{\MTU}(d-r) \ra \overline{\MTU}(d)$
\end{proposition}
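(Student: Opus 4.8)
The plan is to obtain the asserted cofibration as the string of three cofibers attached to the composable pair
$$\MTU(d-r)\xrightarrow{u}\MTU(d)\xrightarrow{v}\MU,$$
where $u$ is the natural map used to define $\MTU(d,r)$ and $v$ is the natural map $\MTU(d)\to\MU$. By construction $\mathrm{cofib}(u)=\MTU(d,r)$ and $\mathrm{cofib}(v)=\overline{\MTU}(d)$, so the whole statement will follow from the octahedral axiom in the stable homotopy category once we identify $\mathrm{cofib}(vu)$ with $\overline{\MTU}(d-r)$.

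The only point that really needs checking is this last compatibility: the composite $vu\colon\MTU(d-r)\to\MU$ must agree with the natural map whose cofiber was used to define $\overline{\MTU}(d-r)$. This is immediate from the setup, since the natural map $\MTU(e)\to\MU$ is by definition the colimit structure map $\MTU(e)\to\MTU$ followed by the equivalence $\MTU\xrightarrow{\ \simeq\ }\MU$, and the structure map $\MTU(d-r)\to\MTU$ factors through $\MTU(d-r)\to\MTU(d)\to\MTU$. Granting this, the octahedral axiom applied to $u$ and $v$ produces a distinguished triangle
$$\mathrm{cofib}(u)\longrightarrow\mathrm{cofib}(vu)\longrightarrow\mathrm{cofib}(v)\longrightarrow\Sigma\,\mathrm{cofib}(u),$$
which is exactly the cofibration $\MTU(d,r)\to\overline{\MTU}(d-r)\to\overline{\MTU}(d)$.

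I do not expect a genuine obstacle here: the argument is purely formal, and the "hard part" amounts to nothing more than the bookkeeping of the previous paragraph. If one prefers to avoid naming the octahedral axiom, the same fact drops out of the $3\times 3$ lemma applied to the commuting square with rows $\MTU(d-r)\to\MU$ and $\MTU(d)\to\MU$ and constant right column: computing the iterated cofiber in one order gives $\mathrm{cofib}\bigl(\overline{\MTU}(d-r)\to\overline{\MTU}(d)\bigr)$, and in the other order it gives $\mathrm{cofib}\bigl(\MTU(d,r)\to\ast\bigr)=\Sigma\MTU(d,r)$, whence $\overline{\MTU}(d-r)\to\overline{\MTU}(d)\to\Sigma\MTU(d,r)$ is a cofibration, and rotating it yields the claim. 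One could also argue by passing to the colimit over $n$ of the towers $\MTU(d+n,r+n)$ and $\MTU(d+n)$, using the colimit model of $\overline{\MTU}$ exhibited in the diagram above, but this merely repackages the same computation.
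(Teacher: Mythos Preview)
Your proof is correct. You argue via the octahedral axiom applied to the composable pair $\MTU(d-r)\to\MTU(d)\to\MU$, identifying the three cofibers as $\MTU(d,r)$, $\overline{\MTU}(d-r)$, and $\overline{\MTU}(d)$; the one nontrivial check, that the composite is the defining map for $\overline{\MTU}(d-r)$, you handle correctly. The paper instead uses the colimit model of $\overline{\MTU}$: it writes down, for each $n\geq 1$, the cofibration $\MTU(d,r)\to\MTU(d+n,r+n)\to\MTU(d+n,n)$, observes that these assemble into a ladder whose middle and right columns are the sequences defining $\overline{\MTU}(d-r)$ and $\overline{\MTU}(d)$, and passes to the colimit. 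Your approach is cleaner and more structural, requiring only that the stable homotopy category is triangulated; the paper's approach is more concrete and avoids invoking the octahedral axiom by name, but tacitly uses that filtered colimits of spectra preserve cofiber sequences. You even mention the paper's argument as your third alternative, so you have in effect subsumed it.
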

\begin{proof}
	There is a commutative diagram as below. The columns are the sequences of spectra used to define $\overline{\MTU}(d-r)$ and $\overline{\MTU}(d)$. The rows are each cofibrations.
	\begin{center}
				\begin{tikzpicture}
			\node (v1) at (-3.5,0.5) {$\MTU(d,r)$};
			\node (v2) at (0,0.5) {$\MTU(d+1,r+1)$};
			\node (v3) at (3.5,0.5) {$\MTU(d+1,1)$};
			\draw [->] (v1) edge (v2);
			\draw [->] (v2) edge (v3);
			\node (v5) at (0,1.5) {$\MTU(d+2,r+2)$};
			\node (v9) at (3.5,1.5) { $\MTU(d+2,2)$};
			\node (v4) at (0,2.5) {\vdots};
			\node (v8) at (3.5,2.5) {\vdots};
			\node (v6) at (0,3.5) {$\overline{\MTU}(d-r)$};
			\node (v7) at (3.5,3.5) {$\overline{\MTU}(d)$};
			\draw [->] (v1) edge (v6);
			\draw [->] (v1) edge (v5);
			\draw [->] (v6) edge (v7);
			\draw [->] (v8) edge (v7);
			\draw [->] (v9) edge (v8);
			\draw [->] (v5) edge (v4);
			\draw [->] (v4) edge (v6);
			\draw [->] (v3) edge (v9);
			\draw [->] (v2) edge (v5);
			\draw [->] (v5) edge (v9);
		\end{tikzpicture}
	\end{center}
	The colimit of the cofibrations defined by each row gives the desired cofibration.
\end{proof}
	There is a second construction of spectrum $\MTU(d)$ due to \cite{bokstedt} which connects these spectra to the structure of complex sections. For any $d$-dimensional complex fiber bundle $E\ra X$ equipped with a Hermitian inner product there is a complex frame bundle $V_{\C,r}(E)\ra X$ with fiber $V_{\C,d,r}$. The space  $V_{\C,d,r}$ is the frame manifold of $r$ ordered complex frames in $\C^d$. 
Now consider the specific case where the bundle is $U_{\C,d,n}\ra G_\C(d,n)$. We can pullback the original construction:
\begin{center}
	
	\begin{tikzcd}[column sep=large, row sep=huge]
		
		p_{V_{\C,r}}^* U_{\C,d,n}^\perp\arrow[r]  
		\arrow[d]& U_{\C,d,n}^\perp \arrow[d] \\
		V_{\C,r} (U_{\C,d,n}) \arrow[r, "p_{V_{\C,r}}"] 	& G_\C(d,n)
	\end{tikzcd}
\end{center}

We form the Thom spaces $Th(p_{V_{\C,r}}^* U_{\C,d,n}^\perp)$ into a Thom spectra which we call $\MTU(d)_{V_r}$. Similarly let $W_{\C,r}(U_{\C,d,n} )$ be the bundle whose fiber is the space of ordered $r$-tuples in $U_{\C,d,n}$ which are (hermitian) orthogonal and of the same length which may be between $0$ and $1$. The fiber $W_{\C,d,r}$ is the cone over $V_{\C,d,r}$ by construction. We can construct a spectrum $\MTU(d)_{W_r}$ in an analogous way. In \cite{nguyen1}, we prove the following:

\begin{proposition}\label{heq-1}
	There is a commutative diagram where all vertical maps are homotopy equivalences.
	
	\begin{center}
		\begin{tikzcd}[column sep=large, row sep=large]
			\MTU(d-r)
			\arrow[r]  
			\arrow[d, leftarrow]& \MTU(d)	  \arrow[d, leftarrow] \\
			\MTU(d)_{V_r} \arrow[r] 	& \MTU(d)_{W_r} 
		\end{tikzcd}
	\end{center}
\end{proposition}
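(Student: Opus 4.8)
The plan is to build the right-hand vertical equivalence by collapsing the contractible cone fibres of $W_{\C,r}$, and the left-hand one by recognising $\MTU(d)_{V_r}$ as a thickening of $\MTU(d-r)$ along the family of complex Stiefel manifolds, whose connectivity grows with $n$. For the right-hand map, observe that the fibre $W_{\C,d,r}$ of $p_{W_{\C,r}}\colon W_{\C,r}(U_{\C,d,n})\to G_\C(d,n)$ is the cone on $V_{\C,d,r}$, hence contractible; the zero $r$-tuple furnishes a section of $p_{W_{\C,r}}$, and the fibrewise cone structure deformation-retracts the total space onto it, so $p_{W_{\C,r}}$ is a homotopy equivalence. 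The bundle map $p_{W_{\C,r}}^*U_{\C,d,n}^\perp\to U_{\C,d,n}^\perp$ covering it then induces a homotopy equivalence $\MTU(d)_{W_r,2n}=Th(p_{W_{\C,r}}^*U_{\C,d,n}^\perp)\to Th(U_{\C,d,n}^\perp)=\MTU(d)_{2n}$, since the Thom space of a pullback along a homotopy equivalence of bases has the same homotopy type; these assemble to a levelwise, hence stable, equivalence $\MTU(d)_{W_r}\to\MTU(d)$.

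For the left-hand map I would first describe $V_{\C,r}(U_{\C,d,n})$ concretely: a point is a $d$-plane $P\subset\C^{d+n}$ together with an orthonormal $r$-frame $\phi\colon\C^r\hookrightarrow P$, and $(P,\phi)\mapsto\phi$ realises $V_{\C,r}(U_{\C,d,n})$ as a fibre bundle over the Stiefel manifold $V_{\C,r}(\C^{d+n})=U(d+n)/U(d+n-r)$ whose fibre is the Grassmannian of $(d-r)$-planes in $\phi(\C^r)^\perp\cong\C^{d-r+n}$. Over the standard frame $\phi_0$ onto the last $r$ coordinates this fibre is the image of the embedding $G_\C(d-r,n)\hookrightarrow V_{\C,r}(U_{\C,d,n})$, $P'\mapsto(P'\oplus\C^r,\phi_0)$, and a direct check identifies the restriction of $p_{V_{\C,r}}^*U_{\C,d,n}^\perp$ to it with $U_{\C,d-r,n}^\perp$. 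Hence this fibre inclusion is covered by a bundle map and induces $\MTU(d-r)_{2n}=Th(U_{\C,d-r,n}^\perp)\to Th(p_{V_{\C,r}}^*U_{\C,d,n}^\perp)=\MTU(d)_{V_r,2n}$, compatibly with the structure maps (all of which come from the inclusions $G_\C(\,\cdot\,,n)\hookrightarrow G_\C(\,\cdot\,,n+1)$). Since $V_{\C,r}(\C^{d+n})$ is $2(d+n-r)$-connected, the inclusion of a fibre into the total space is $2(d+n-r)$-connected, and, passing to Thom spaces of an $n$-plane bundle (via the Thom isomorphism and relative Hurewicz, both Thom spaces being $(2n-1)$-connected), the map $\MTU(d-r)_{2n}\to\MTU(d)_{V_r,2n}$ is roughly $(2d+4n-2r)$-connected. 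Its connectivity therefore grows without bound in $n$, so the map of spectra $\MTU(d-r)\to\MTU(d)_{V_r}$ is a stable equivalence, and the left-hand vertical map in the statement is a homotopy inverse of it.

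It remains to check that the square commutes in the homotopy category. The bottom map $\MTU(d)_{V_r}\to\MTU(d)_{W_r}$ is induced by the fibrewise inclusion $V_{\C,r}(U_{\C,d,n})\hookrightarrow W_{\C,r}(U_{\C,d,n})$ of the unit-length frames, so its composite with $p_{W_{\C,r}}$ is simply $p_{V_{\C,r}}$. Precomposing either route around the square with the forward equivalence $\MTU(d-r)\to\MTU(d)_{V_r}$ from the previous step turns both routes into the same map of Thom spectra: the one induced on base spaces by the standard embedding $G_\C(d-r,n)\to G_\C(d,n)$, $P'\mapsto P'\oplus\C^r$ (for the lower route, factored as $G_\C(d-r,n)\hookrightarrow V_{\C,r}(U_{\C,d,n})\xra{p_{V_{\C,r}}}G_\C(d,n)$), with matching tautological bundles; but this is precisely the map defining $\MTU(d-r)\to\MTU(d)$, so the two routes agree, and since the forward map is an equivalence the square commutes.

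The only genuinely delicate point is the connectivity bookkeeping in the second paragraph: pinning down the connectivity of the complex Stiefel manifolds, tracking how much the $n$-fold Thom construction raises the connectivity of a map of bases, and confirming that these range-of-validity estimates actually assemble into a $\pi_*$-isomorphism of spectra rather than merely a statement valid in a stable range. Everything else amounts to carefully unwinding the definitions of the frame and cone bundles and their tautological bundles.
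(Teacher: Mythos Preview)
The paper does not actually prove this proposition here; immediately before the statement it reads ``In \cite{nguyen1}, we prove the following:'' and defers the argument to the previous paper. So there is no in-text proof to compare against. That said, your argument is correct and is almost certainly the intended one (it is the complex analogue of the argument in \cite{bokstedt} for the real case): the right-hand equivalence comes from the contractibility of the cone fibres $W_{\C,d,r}$, and the left-hand equivalence from the fact that $V_{\C,r}(U_{\C,d,n})$ fibres over the highly connected space $V_{\C,r}(\C^{d+n})$ with fibre $G_\C(d-r,n)$, so that the fibre inclusion becomes an equivalence after Thomification and stabilisation.

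Your own caveat about the connectivity bookkeeping is well placed but not a gap: the complex Stiefel manifold $V_{\C,r}(\C^{d+n})$ is $2(d+n-r)$-connected, a $k$-connected map of bases induces a $(k+2n)$-connected map on Thom spaces of a rank-$n$ complex bundle (Thom isomorphism plus relative Hurewicz, exactly as you say), and a map of spectra which is $c(n)$-connected on $2n$-th spaces with $c(n)-2n\to\infty$ is a $\pi_*$-isomorphism. Each of these is standard, so the assembly into a stable equivalence is routine once stated.
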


By using these homotopy equivalences in \cite{nguyen1}, we show that:
\begin{theorem}
	$\pi_{2d} (\MTU(d-r))$ is the even complex section cobordism group of $2d$ dimensional manifolds with $r$ complex sections. Elements of $\pi_{2d} (\MTU(d-r))$ are classes of almost complex manifolds with $r$ complex sections. Two classes are the same if there is a complex cobordism equipped with $r+1$ sections between them. The map $\pi_{2d} (\MTU(d-r))\ra \pi_{2d} (\MU)$ is the forgetful map. The image of the forgetful map is all cobordism classes which contain a manifold which can be equipped with $r$ complex sections.
\end{theorem}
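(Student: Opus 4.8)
The plan is to establish the theorem by a Pontryagin--Thom argument applied to the geometric model $\MTU(d)_{V_r}$ of Proposition~\ref{heq-1}, which is homotopy equivalent to $\MTU(d-r)$. Recall that $\MTU(d)_{V_r}$ is the Thom spectrum whose $2n$-th space is the Thom space of the $2n$-dimensional bundle $p_{V_{\C,r}}^*U_{\C,d,n}^\perp$ over the complex $r$-frame bundle $V_{\C,r}(U_{\C,d,n})$, so that $\pi_{2d}(\MTU(d-r))$ is, in the colimit over $n$, the set of homotopy classes of maps $S^{2d+2n}\to Th(p_{V_{\C,r}}^*U_{\C,d,n}^\perp)$. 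Making such a map transverse to the zero section and pulling back produces a closed $2d$-manifold $M\hookrightarrow\R^{2d+2n}$ together with a map $g\colon M\to V_{\C,r}(U_{\C,d,n})$ and a fibrewise-linear isomorphism between the normal bundle $\nu(M)$ and $g^*p_{V_{\C,r}}^*U_{\C,d,n}^\perp$; conversely, from a closed almost complex $M^{2d}$ with $r$ complex sections one embeds $M$ in Euclidean space, puts a compatible complex structure on the normal bundle, lifts the classifying map of the tangent bundle through the $r$-frame bundle using the sections, and collapses onto a tubular neighbourhood. I would organise the proof as the verification that these two constructions are mutually inverse on (co)bordism classes.

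To read off the geometry from the first construction, set $f=p_{V_{\C,r}}\circ g\colon M\to G_\C(d,n)$. Then $\nu(M)\cong f^*U_{\C,d,n}^\perp$, the embedding gives $TM\oplus\nu(M)\cong\underline{\R}^{2d+2n}$, and over $G_\C(d,n)$ one has $f^*U_{\C,d,n}\oplus f^*U_{\C,d,n}^\perp\cong\underline{\C}^{d+n}\cong\underline{\R}^{2d+2n}$; together these exhibit $TM$ as (stably) isomorphic to the complex rank-$d$ bundle $f^*U_{\C,d,n}$, so $M$ is almost complex, and the lift $g$ through $V_{\C,r}(U_{\C,d,n})\to G_\C(d,n)$ is precisely the datum of $r$ everywhere-linearly-independent complex sections of $f^*U_{\C,d,n}\cong TM$. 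I expect the main technical obstacle to be making this dictionary honest rather than merely stable: the naive Pontryagin--Thom correspondence only records stable normal data, whereas the theorem asserts a genuine complex structure on $TM$ together with genuinely independent sections. This is why one works with the explicit geometric (frame-bundle) model of Proposition~\ref{heq-1} and, if needed, with the parametrised ``scanning'' form of Pontryagin--Thom, in which a point of the relevant space is an honest $2d$-manifold germ carrying the genuine tangential structure; it is also exactly the reason that unstable obstructions such as the Euler-class condition $\chi([M^{2d}])=0$ appearing in Theorems~\ref{3vf} and \ref{4vfodd} are visible in $\pi_{2d}(\MTU(d-r))$ in the first place.

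For the equivalence relation I would run the same transversality argument one dimension higher: a homotopy of Pontryagin--Thom maps, made transverse, yields a compact $(2d+1)$-manifold $W\subset\R^{2d+2n}\times[0,1]$ with $\partial W$ the disjoint union of the two given manifolds, a map $W\to V_{\C,r}(U_{\C,d,n})$, and a normal-bundle identification; unwinding the bundle data as before shows that $TW\oplus\underline{\R}$ carries a complex structure together with $r+1$ linearly independent complex sections, restricting compatibly to the given data on the boundary. This is precisely the complex cobordism with $r+1$ sections defined in \cite{nguyen1}, so $\pi_{2d}(\MTU(d-r))$ is the even complex section cobordism group as claimed; the fact that the two constructions are mutually inverse and the relation well defined is the usual Pontryagin--Thom formalism.

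It remains to identify the map to $\pi_{2d}(\MU)$. Using the homotopy equivalence $\MTU\simeq\MU$ recalled above and the factorisation $\MTU(d-r)\to\MTU(d-r+1)\to\cdots\to\MTU\simeq\MU$ through the structure maps, the induced map on $\pi_{2d}$ sends the class of $(M^{2d},\,r\text{ sections})$ to the class in $\pi_{2d}(\MU)=\Omega_{2d}^U$ of the underlying manifold with its stable complex structure --- it forgets both the sections and the unstable structure, hence is the forgetful map. Its image is contained in the set of cobordism classes admitting a representative with $r$ complex sections by construction; conversely, if $[M^{2d}]\in\Omega_{2d}^U$ contains a representative $N$ with $r$ complex sections of $TN$, then the class of $(N,\,r\text{ sections})$ in $\pi_{2d}(\MTU(d-r))$ maps to $[N]=[M^{2d}]$, so the image is exactly that set. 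This completes the proof.
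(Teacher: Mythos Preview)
Your proposal is correct and follows essentially the same approach that the paper indicates: the theorem is not proved in the present paper but is quoted from \cite{nguyen1}, where it is established via the Pontryagin--Thom construction applied to the frame-bundle model $\MTU(d)_{V_r}$ of Proposition~\ref{heq-1}, exactly as you outline. Your identification of the stable/unstable subtlety and its resolution through the Madsen--Tillmann/GMTW tangential-structure formalism, and your derivation of the $r+1$ sections on the cobordism from the extra trivial summand, are the standard ingredients one expects in that argument.
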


The space $\MTU(d,1)$ can be given a simpler description which will be useful for computing the homotopy groups: (cf. \cite{bokstedt}[Proposition 2.6])  

\begin{theorem} \label{MTUd1}
	There is a homotopy equivalence:
	\[\MTU(d,1)\cong S^{\infty+2d} \vee \Sigma^{\infty+2d} BU(d)\]
	In particular,
	\[\pi_{q+2d}(\MTU(d,1))\cong \pi_q^s(S^{0}) \oplus \pi_{q}^s (BU(d))\]
\end{theorem}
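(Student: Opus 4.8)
The plan is to obtain the splitting from Proposition~\ref{heq-1}: I would replace the defining map $\MTU(d-1)\ra\MTU(d)$ of $\MTU(d,1)$ by the map $\MTU(d)_{V_1}\ra\MTU(d)_{W_1}$ and compute the cofiber one space at a time, where it becomes an elementary manipulation of Thom spaces of disk and sphere bundles of the tautological bundle. First I would record that $W_{\C,1}(U_{\C,d,n})$ is the unit disk bundle $D(U_{\C,d,n})$ and $V_{\C,1}(U_{\C,d,n})$ the unit sphere bundle $S(U_{\C,d,n})$ of the tautological $d$-plane bundle over $G_\C(d,n)$, and that $S(U_{\C,d,n})\hookrightarrow D(U_{\C,d,n})$ is a closed cofibration; hence $\MTU(d)_{V_1}\ra\MTU(d)_{W_1}$ is a levelwise cofibration, and its cofiber is the levelwise quotient. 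Since the vertical maps of Proposition~\ref{heq-1} (taken with $r=1$) are homotopy equivalences, they identify this cofiber with the homotopy cofiber of $\MTU(d-1)\ra\MTU(d)$, i.e. with $\MTU(d,1)$.

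The heart of the argument is then a standard computation of the relative Thom space of a bundle pulled up along a disk bundle: for vector bundles $\pi\colon E\ra B$ and $\xi\ra B$ there is a natural homeomorphism
$$Th(\pi^*\xi|_{D(E)})\,/\,Th(\pi^*\xi|_{S(E)})\;\cong\;Th(\xi\oplus E).$$
I would verify this fiberwise: $D(\pi^*\xi)$ is the fiber product $D(\xi)\times_B D(E)$, its total boundary (as a bundle of manifolds with corners) is $S(\xi)\times_B D(E)\cup_{S(\xi)\times_B S(E)}D(\xi)\times_B S(E)$, and collapsing the part over $S(E)$ in $Th(\pi^*\xi|_{D(E)})=D(\pi^*\xi)/S(\pi^*\xi)$ identifies the quotient with $D(\xi\oplus E)/S(\xi\oplus E)$. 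Applying this with $B=G_\C(d,n)$, $E=U_{\C,d,n}$ and $\xi=U_{\C,d,n}^\perp$, and using that $U_{\C,d,n}^\perp\oplus U_{\C,d,n}$ is the trivial bundle $\underline{\C}^{\,d+n}$ by construction of the Grassmannian, the cofiber at level $2n$ is $Th(\underline{\C}^{\,d+n})=\Sigma^{2(d+n)}(G_\C(d,n)_+)$.

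It then remains to identify the structure maps and pass to the colimit. Tracing the structure map of $\MTU(d)$ — induced by the bundle map $U_{\C,d,n}^\perp\oplus\C\ra U_{\C,d,n+1}^\perp$ over $G_\C(d,n)\ra G_\C(d,n+1)$ — through the identifications above, the resulting structure map $\Sigma^{2(d+n+1)}(G_\C(d,n)_+)\ra\Sigma^{2(d+n+1)}(G_\C(d,n+1)_+)$ of the cofiber spectrum is just $\Sigma^{2(d+n+1)}$ applied to the inclusion $G_\C(d,n)_+\ra G_\C(d,n+1)_+$, the extra $\C$ summand of the bundle being absorbed into the extra suspension coordinate. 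Hence
$$\MTU(d,1)\;\simeq\;\varinjlim_n\,\Sigma^{-2n}\Sigma^{2(d+n)}\Sigma^\infty(G_\C(d,n)_+)\;=\;\Sigma^{2d}\Sigma^\infty\big(\varinjlim_n G_\C(d,n)_+\big)\;=\;\Sigma^{\infty+2d}(BU(d)_+),$$
and splitting off the disjoint basepoint, $BU(d)_+\simeq S^0\vee BU(d)$, gives $\MTU(d,1)\simeq S^{\infty+2d}\vee\Sigma^{\infty+2d}BU(d)$. The homotopy group statement is then immediate from $\pi_{q+2d}(\Sigma^{\infty+2d}X)=\pi_q^s(X)$ with $X=BU(d)_+$.

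I expect the only real obstacle to be bookkeeping rather than mathematics. One must check that Proposition~\ref{heq-1} genuinely provides a strictly commuting square of cofibrations, so that the two horizontal cofibers coincide, and that the fiberwise homeomorphisms above are natural in $n$, so that they assemble into an equivalence of spectra rather than a mere levelwise equivalence. In view of the remark in the text about conflicting indexing conventions for these spectra, I would be especially careful to keep the suspension indices consistent — in particular to confirm that $\MTU(d)_{W_1}$ has its $2n$-th space a Thom space of an $n$-dimensional complex bundle, matching the indexing of $\MTU(d)$ and $\MTU(d-1)$ — so that the colimit is formed over the correct range.
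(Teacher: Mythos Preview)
Your proposal is correct and follows essentially the same route as the paper: both invoke Proposition~\ref{heq-1} to replace the defining map by $\MTU(d)_{V_1}\ra\MTU(d)_{W_1}$, identify $V_1$ and $W_1$ with the sphere and disk bundles of $U_{\C,d,n}$, apply the Thom-space identity $Th(\xi|_{D(E)})/Th(\xi|_{S(E)})\cong Th(\xi\oplus E)$ to obtain $Th(\underline{\C}^{\,d+n})$ over $G_\C(d,n)$, and then pass to the colimit. Your write-up is somewhat more careful than the paper's, which simply quotes the Thom-space identity as a known fact and dispatches the structure maps with ``letting $n\ra\infty$'', but the underlying argument is the same.
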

\begin{proof}
	We observe that $\MTU(d,1)_{2n}\cong Th(p^*_{W_{\C,1}} U^\perp_{\C, d,n})/Th(p^*_{V_{\C,1}} U^\perp_{\C, d,n})$ where $p^*_{V_{\C,1}}$ is the projection $p^*_{V_{\C,1}}: V_1(U_{\C, d,n})\cong S(U_{\C, d,n})\ra G_\C(d,n)$ and $p^*_{W_{\C,1}}$ is the projection $p^*_{W_{\C,1}}: W_1(U_{\C, d,n})\cong D(U_{\C, d,n})\ra G_\C(d,n)$.
	For two vector bundles $V_1$ and $V_2$ over the same base,
	\[Th(V_1\oplus V_2)\cong Th(V_2|_{D(V_1)})/Th(V_2|_{S(V_1)}) \]
	Thus, \[\MTU(d,1)_{2n}\cong Th(U_{\C, d,n}\oplus U^\perp_{\C, d,n})\]
	Since, $U_{\C, d,n}\oplus U^\perp_{\C, d,n}$ is a trivial $d+n$ dimensional complex bundle,
	 \[\MTU(d,1)_{2n}\cong S^{2d+2n} \vee \Sigma^{2d+2n} G_\C(d,n)\] Letting $n\ra \infty$ completes the proof.
\end{proof}

We briefly describe the homology of the spectra $\MTU(d)$.
\begin{proposition}\label{homMTU}
	There is a Thom isomorphism:
	\[H_*(BU(d);\Z)\cong H_*(\MTU(d);\Z)\]
	and the map $H_*(\MTU(d);\Z)\ra H_*(\MU;\Z)$ is an injection. Moreover, the image of $H_*(\MTU(d))$ in $H_*(\MU;\Z)\cong \Z[B_1,B_2,...]$ is generated as a group by monomials of degree less than $d$.

\end{proposition}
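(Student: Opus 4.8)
The plan is to deduce all three assertions from the Thom isomorphism, once the map $\MTU(d)\to\MU$ has been identified on homology.

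First I would set up the Thom isomorphism. Each $U_{\C,d,n}^\perp$ is a complex, hence canonically $\Z$-oriented, bundle, so there are Thom isomorphisms $\tilde H_{*+2n}(Th(U_{\C,d,n}^\perp))\cong H_*(G_\C(d,n))$, and since the structure maps of $\MTU(d)$ are induced by the bundle maps $U_{\C,d,n}^\perp\oplus\C\to U_{\C,d,n+1}^\perp$, which carry Thom class to Thom class, these isomorphisms are compatible with the colimit system; passing to the limit over $n$ gives $H_*(\MTU(d))\cong\mathrm{colim}_n H_*(G_\C(d,n))=H_*(BU(d))$. The same argument for $\MU$ gives $H_*(\MU)\cong H_*(BU)$, which is the source of the presentation $H_*(\MU)\cong\Z[B_1,B_2,\dots]$. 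By naturality of the Thom isomorphism, the canonical map $\MTU(d)\to\MTU$, being a map of Thom spectra covering the standard inclusion $j\colon BU(d)\hookrightarrow BU$, induces $j_*\colon H_*(BU(d))\to H_*(BU)$; composing with the isomorphism $H_*(\MTU)\cong H_*(\MU)$ coming from the equivalence $\MTU\simeq\MU$, the map $H_*(\MTU(d))\to H_*(\MU)$ is $j_*$ followed by an isomorphism.

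Injectivity is then immediate once $j_*$ is seen to be a split monomorphism, and it is: $j^*\colon H^*(BU)=\Z[c_1,c_2,\dots]\to H^*(BU(d))=\Z[c_1,\dots,c_d]$ is the quotient killing $c_{d+1},c_{d+2},\dots$, a split surjection of graded abelian groups, so its dual $j_*$ is split injective. For the image I would first compute $\mathrm{im}(j_*)\subseteq H_*(BU)$ and then transport it across the isomorphism. Factor $j$ through the iterated Whitney sum $\mu\colon BU(1)^{d}\to BU(d)$: on cohomology $\mu^*$ identifies $\Z[c_1,\dots,c_d]$ with the ring of symmetric polynomials inside $H^*(BU(1)^d)=\Z[x_1,\dots,x_d]$, and since $\Z[x_1,\dots,x_d]$ is a free module over that subring, the inclusion is split over $\Z$, so $\mu_*$ is surjective and $\mathrm{im}(j_*)=\mathrm{im}((j\mu)_*)$. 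The Whitney sum $BU(1)^d\to BU$ sends, on homology, a tensor $\beta_{i_1}\otimes\cdots\otimes\beta_{i_d}$ of canonical generators of $H_*(BU(1))$ to the product $b_{i_1}\cdots b_{i_d}$ (with $b_0=1$), so $\mathrm{im}(j_*)$ is precisely the $\Z$-span of the monomials in the polynomial generators $b_i$ of $H_*(BU)$ of length at most $d$. Since the isomorphism $H_*(BU)\cong H_*(\MU)$ is a ring isomorphism carrying one polynomial generating set to another, the image of $H_*(\MTU(d))\to H_*(\MU)$ is the $\Z$-span of the monomials of length at most $d$ in the corresponding generators $B_i$ of $H_*(\MU)$, which is the description in the proposition.

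The genuine obstacle is the identification of the map on homology: one must track Thom classes carefully through the equivalence $\MTU\simeq\MU$ both to see that the relevant base map is, up to a self-equivalence of $BU$, the standard inclusion $j$, and to pin down precisely which polynomial generating set of $H_*(\MU)$ plays the role of $\{B_i\}$ so that the image is literally a span of bounded-length monomials. A second point that I would prove rather than assert is the integral freeness of $\Z[x_1,\dots,x_d]$ over its ring of symmetric functions, which is what makes $\mu_*$ surjective over $\Z$ and not merely over $\Q$; everything else is routine bookkeeping with the monomial basis of $H_*(BU)$.
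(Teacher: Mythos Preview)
Your argument is correct and considerably more detailed than what the paper actually does: the paper's entire proof is the single sentence ``This result follows by dualizing the corresponding result Proposition 2.2 in cohomology from \cite{nguyen1}.'' In other words, the author established in the predecessor paper that $H^*(\MU)\to H^*(\MTU(d))$ is the evident quotient of polynomial rings, and here simply dualizes over $\Z$. You instead work directly in homology: you set up the Thom isomorphism as a colimit, get injectivity by dualizing the split surjection $j^*$, and identify the image by factoring through the Whitney-sum map $BU(1)^d\to BU(d)$ and using freeness of $\Z[x_1,\dots,x_d]$ over its symmetric subring. Both routes rest on the same underlying fact---the map on (co)homology is the standard one for $BU(d)\hookrightarrow BU$ up to the self-equivalence of $BU$---but yours is self-contained while the paper's is a citation.

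Your closing caveat is well placed and is the one genuine subtlety here. The composite $\MTU(d)\to\MTU\simeq\MU$ on base spaces is $\iota\circ j$ with $\iota$ the inversion on $BU$, and $\iota_*$ is a ring automorphism of $H_*(BU)$ but it does \emph{not} literally fix the individual $b_i$ (for instance $\iota_*(b_2)=b_1^2-b_2$). So the image is the span of length-$\leq d$ monomials in the transported generators $\iota_*(b_i)$, and one must either declare that these are the $B_i$ in the statement or check that the span is independent of the choice---exactly the point you isolate. The paper sidesteps this by appealing to the cohomological statement, where the map is transparently $\Z[c_1,c_2,\dots]\twoheadrightarrow\Z[c_1,\dots,c_d]$; dualizing that gives the image description without ever naming the inversion. (Incidentally, the paper's ``degree less than $d$'' should read ``at most $d$'', as the later corollary about $E_*(\overline{\MTU}(d))$ confirms.)
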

\begin{proof}
	This result follows by dualizing the corresponding result, Proposition 2.4, for the cohomology of $\MTU(d)$ from \cite{nguyen1}.
\end{proof}

The lower homotopy groups of $\MTU(d)$ are trivial to compute \cite{bokstedt2}, \cite{galatius}.
\begin{lemma} \label{iso}
	The group $\pi_{q}(\MTU(d))\cong \pi_{q}(\MU)$ for $q\leq 2d$. 
\end{lemma}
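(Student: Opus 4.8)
The plan is to compare $\MTU(d)$ with $\MU$ one degree at a time, using the cofibre sequence $\MTU(d)\to\MTU(d+1)\to\MTU(d+1,1)$ (the cofibre sequence defining $\MTU(d+1,1)$) together with the splitting $\MTU(d+1,1)\simeq S^{\infty+2(d+1)}\vee\Sigma^{\infty+2(d+1)}BU(d+1)$ of Theorem~\ref{MTUd1}, and then passing to the colimit $\MU\simeq\MTU=\operatorname{colim}_k\MTU(k)$.

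First I would record that $\MTU(d+1,1)$ is $(2d+1)$-connected. By Theorem~\ref{MTUd1} one has $\pi_{q+2(d+1)}(\MTU(d+1,1))\cong\pi_q^s(S^{0})\oplus\pi_q^s(BU(d+1))$, and since $BU(d+1)$ is connected this group vanishes for $q<0$; hence $\pi_m(\MTU(d+1,1))=0$ for $m\le 2d+1$. Feeding this into the long exact homotopy sequence of the cofibration $\MTU(d)\to\MTU(d+1)\to\MTU(d+1,1)$ shows that $\pi_m(\MTU(d))\to\pi_m(\MTU(d+1))$ is an isomorphism for $m\le 2d$ (and an epimorphism for $m=2d+1$). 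Iterating, for a fixed $m$ every map in the tower $\pi_m(\MTU(d))\to\pi_m(\MTU(d+1))\to\pi_m(\MTU(d+2))\to\cdots$ is an isomorphism as soon as $m\le 2d$, because the map $\pi_m(\MTU(d+k))\to\pi_m(\MTU(d+k+1))$ is an isomorphism whenever $m\le 2(d+k)$, which holds for all $k\ge 0$ once $m\le 2d$. Since the structure maps $\MTU(k)\to\MTU(k+1)$ are induced by closed inclusions of Grassmannians, the colimit defining $\MTU$ computes homotopy groups as $\pi_m(\MTU)=\operatorname{colim}_k\pi_m(\MTU(k))$; combined with $\MTU\simeq\MU$ this gives $\pi_m(\MTU(d))\cong\pi_m(\MU)$ for $m\le 2d$, which is the claim.

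Alternatively one can argue homologically: by Proposition~\ref{homMTU} the map $H_*(\MTU(d))\to H_*(\MU)$ is injective, so the cofibre $\overline{\MTU}(d)$ of $\MTU(d)\to\MU$ has reduced homology equal to the cokernel, which Proposition~\ref{homMTU} identifies as a span of monomials in $\Z[B_1,B_2,\dots]$ involving more than $d$ of the $B_i$ and therefore lying in high degrees; so $\overline{\MTU}(d)$ is highly connected and the Hurewicz theorem together with the long exact sequence of $\MTU(d)\to\MU\to\overline{\MTU}(d)$ yields the same isomorphism range. I do not expect any real obstacle here: this is precisely the ``trivial to compute'' range alluded to before the statement, already implicit in \cite{bokstedt2} and \cite{galatius}. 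The only points that need a little care are keeping the indexing and connectivity bookkeeping straight (so that $\MTU(d+1,1)$ is genuinely $(2d+1)$-connected rather than off by a shift) and justifying that $\pi_*$ commutes with the colimit $\MTU=\operatorname{colim}_d\MTU(d)$ — which is fine because that colimit is taken along cofibrations.
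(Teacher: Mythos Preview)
Your proposal is correct. The paper does not actually supply a proof of this lemma, merely citing \cite{bokstedt2} and \cite{galatius} and calling it ``trivial to compute,'' so there is nothing to compare against; both of your arguments --- the connectivity step via the cofibre $\MTU(d)\to\MTU(d+1)\to\MTU(d+1,1)$ iterated to the colimit, and the homological argument showing $\overline{\MTU}(d)$ is $(2d+1)$-connected --- are standard and valid ways to fill this in.
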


There is an obstruction to lifting an element of $\pi_{2d}(\MU)$ to $\pi_{2d}(\MTU(d-r))$ which lands in the homotopy group $\pi_{2d}(\MTU(d,r))$. In \cite{nguyen1} we described the image of $\pi_*(\MTU(d))\otimes \Q\ra \pi_*(\MU)\otimes \Q$ as follows:

\begin{theorem}\label{ratobs1}
	Let $M^{2d}$ be a $d$-dimensional almost-complex manifold. The Chern characteristic classes $s_\omega(M^{2d})=0$ for all $\omega$ of length greater than $d-r$ if and only if the cobordism class $[M^{2d}]$ lifts to $\pi_{2d}(\MTU(d-r))\otimes \Q$. Geometrically, this implies there exists integer $c$ such that the cobordism class $c[M^{2d}]$ contains a manifold $N^{2d}$ with $r$ complex sections on $TN$.
\end{theorem}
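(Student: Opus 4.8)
The plan is to reduce the assertion to a computation in rational homology together with standard symmetric-function bookkeeping, using Proposition \ref{homMTU} as the one substantive input. Because $\MTU(d-r)$ and $\MU$ are both connective, their rational Hurewicz maps $\pi_*(-)\otimes\Q\to H_*(-;\Q)$ are isomorphisms, so by naturality the image of $\pi_{2d}(\MTU(d-r))\otimes\Q\to\pi_{2d}(\MU)\otimes\Q$ equals the image $V$ of $H_{2d}(\MTU(d-r);\Q)\to H_{2d}(\MU;\Q)$. Under the Thom isomorphism $H_*(\MU;\Z)\cong H_*(BU;\Z)=\Z[B_1,B_2,\dots]$, with $B_i$ the image in $H_{2i}(BU)$ of the standard generator of $H_{2i}(BU(1))$, Proposition \ref{homMTU} applied to $\MTU(d-r)$ identifies $V$ as the $\Q$-span of the monomials $B_\lambda:=B_{\lambda_1}\cdots B_{\lambda_{\ell(\lambda)}}$ over partitions $\lambda\vdash d$ with $\ell(\lambda)\le d-r$.

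The heart of the argument is to identify $V$ intrinsically. The $s$-classes $s_\omega\in H^{2d}(\MU;\Q)$ --- the monomial symmetric functions in the Chern roots, carried through the Thom isomorphism --- form a $\Q$-basis indexed by the partitions $\omega\vdash d$, and $s_\omega(M^{2d})$ is by definition the value of $s_\omega$ on the Hurewicz image of $[M^{2d}]$ in $H_{2d}(\MU;\Q)$. I claim $V=W$, where
\[
W:=\{\,x\in H_{2d}(\MU;\Q)\ :\ \langle s_\omega,x\rangle=0\ \text{ for every }\ \omega\vdash d\text{ with }\ell(\omega)\ge d-r+1\,\}.
\]
For $V\sb W$: the class $B_\lambda$ is an $\ell(\lambda)$-fold Pontryagin product of classes pulled back from $H_*(BU(1))$, so the pairing $\langle s_\omega,B_\lambda\rangle$ extracts a coefficient of the pullback of $s_\omega$ along the Whitney-sum map $BU(1)^{\ell(\lambda)}\to BU$, namely of the monomial symmetric polynomial of $\omega$ in $\ell(\lambda)$ variables; this polynomial is identically zero once $\ell(\omega)>\ell(\lambda)$, in particular whenever $\ell(\omega)\ge d-r+1$. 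The opposite inclusion follows by counting dimensions: with $p(d)$ the number of partitions of $d$, one has $\dim_\Q V=\#\{\lambda\vdash d:\ell(\lambda)\le d-r\}$ while $\dim_\Q W=p(d)-\#\{\omega\vdash d:\ell(\omega)\ge d-r+1\}=\#\{\omega\vdash d:\ell(\omega)\le d-r\}$, so $V$ and $W$ have the same finite dimension and $V\sb W$ forces equality. Hence $[M^{2d}]$ lifts to $\pi_{2d}(\MTU(d-r))\otimes\Q$ if and only if $s_\omega(M^{2d})=0$ for all $\omega$ with $\ell(\omega)\ge d-r+1$.

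For the geometric consequence, suppose $[M^{2d}]\otimes 1$ lies in the image of $\pi_{2d}(\MTU(d-r))\otimes\Q$. Clearing denominators produces a positive integer $c$ and a class $\tilde y\in\pi_{2d}(\MTU(d-r))$ whose image in $\pi_{2d}(\MU)=\Omega_{2d}^U$ agrees with $c[M^{2d}]$ up to torsion; since $\Omega_{2d}^U$ is torsion-free, $\tilde y$ maps to $c[M^{2d}]$ exactly. By the theorem identifying $\pi_{2d}(\MTU(d-r))$ with the complex-section cobordism group, whose forgetful image consists precisely of the cobordism classes containing a representative with $r$ complex sections, the class $c[M^{2d}]$ contains a manifold $N^{2d}$ with $r$ complex sections on $TN$.

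The step I expect to demand the most care is the identification $V=W$, since this is where one must pin down the duality conventions --- which polynomial generators $B_i$ of $H_*(\MU;\Z)$ are used, and whether $s_\omega[M]$ is computed from the tangent bundle or the stable normal bundle. These choices are ultimately immaterial, as the antipode of $H^*(\MU;\Q)$ carries each $s_\omega$ to $\pm s_\omega$ and hence preserves the length filtration, but the bookkeeping must be kept straight. All the genuine content sits in Proposition \ref{homMTU}; the remainder is linear algebra with symmetric functions.
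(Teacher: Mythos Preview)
The paper does not prove Theorem~\ref{ratobs1} here; it is quoted from the companion paper \cite{nguyen1} (see the sentence immediately preceding the statement), so there is no in-paper argument to compare against directly.

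Your proof is correct and is the natural one. The rational Hurewicz isomorphism reduces the question to homology; Proposition~\ref{homMTU} applied to $\MTU(d-r)$ identifies the image in $H_{2d}(\MU;\Q)$ with the span of the monomials $B_\lambda$ with $\ell(\lambda)\le d-r$; and the pairing computation $\langle s_\omega,B_\lambda\rangle=0$ for $\ell(\omega)>\ell(\lambda)$ (monomial symmetric polynomials vanish in too few variables) plus the dimension count gives $V=W$. In fact $\{s_\omega\}$ and $\{B_\lambda\}$ are dual bases up to nonzero scalars, so $V=W$ is immediate without the dimension count, but your version is fine. The clearing-denominators step for the geometric consequence is also correct, using that $\Omega^U_*$ is torsion-free.

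Two small remarks. First, the rational Hurewicz isomorphism $\pi_*(-)\otimes\Q\cong H_*(-;\Q)$ holds for arbitrary spectra (since the Moore spectrum $S_\Q$ is equivalent to $H\Q$), so connectivity is not needed. Second, Proposition~\ref{homMTU} as printed says ``monomials of degree less than $d$'', but this must be read as ``degree at most $d$'': Lemma~\ref{iso} forces the map to be an isomorphism through homological degree $2d$, and the monomial $B_1^d$ has polynomial degree $d$. You correctly use the $\le$ reading when you take the image for $\MTU(d-r)$ to be spanned by $B_\lambda$ with $\ell(\lambda)\le d-r$.
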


 We will now describe the image of the map $\pi_*(\MTU(d))\ra \pi_*(\MU)$. As stated in the introduction, we will call the torsion in the cokernel, the torsion obstructions. Thus we determine which cobordism classes of manifolds can be equipped with complex sections. To do so, we will study the $p$-local Adams-Novikov spectral sequence for $\MTU(d)$.

\section{Computing the torsion obstruction}
This section will be devoted to identifying which elements of $\pi_*(\MU)$ can be lifted to $\pi_*(\MTU(d))$.  We will use the following notational conventions. Then let $B_i\in MU_*(\MU)\cong\pi_*(\MU)\otimes H_*(\MU)$, be the generators (as a $\pi_*(\MU)$ algebra). These $B_i$ correspond to the elements $b_i\in H_*(\MU)\cong \Z[b_1,b_2,...]$. There is an injective Hurewicz homomorphism $\pi_*(\MU)\ra H_*(\MU)$. By slight abuse of notation, we call the elements of $\pi_*(\MU)$ by their image in $H_*(\MU)$. We begin by stating the following results about $MU$ theory originally developed by \cite{novikov} and described in detail in \cite{ravenel} Appendix 2.1.

\begin{theorem}
	The pair $(\pi_*(\MU), MU_*(\MU))$ is a Hopf algebroid with the following structures:
	\begin{itemize}
		\item An augmentation map $\e: MU_*(\MU)\ra \pi_*(\MU)$, defined by $\e(B_i)=0$.
		\item A left unit $\eta_L: \pi_*(\MU)\ra MU_*(\MU)$, defined as the standard inclusion.
		\item A right unit, $\eta_R: \pi_*(\MU)\ra MU_*(\MU)$, corresponding to the Hurewicz homomorphism, defined by the formula:
		 \[\sum_i\eta_R(b_i) x^{i+1}= \sum_j B_j \left(\sum_k b_k^{i+1} \right)^{j+1}  \]
		\item A product inherited from $H_*(\MU)$.
		\item A coproduct defined by:
		\[\sum_i\Delta(B_i) = \sum_j \left(\sum_i B_i \right)^{j+1}\otimes B_j   \]
	\end{itemize}
 
\end{theorem}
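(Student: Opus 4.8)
The plan is to recognize the statement as the classical identification of $(\pi_*(\MU), MU_*(\MU))$ with the Hopf algebroid of formal group laws and their strict isomorphisms, originally due to Novikov and Landweber \cite{novikov} and treated in detail in \cite{ravenel}, Appendix A2.1; the work is to assemble the standard pieces and pin down the two displayed formulas. First I would establish the underlying additive structure. Since $H_*(\MU;\Z)\cong\Z[b_1,b_2,\dots]$ is a polynomial ring concentrated in even degrees and torsion-free, the Künneth (equivalently Atiyah--Hirzebruch) spectral sequence computing $MU_*(\MU)=\pi_*(\MU\wedge\MU)$ collapses, and $MU_*(\MU)$ is a free $\pi_*(\MU)$-module, namely the polynomial algebra $\pi_*(\MU)[B_1,B_2,\dots]$ on classes $B_i$ projecting to $b_i$. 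In particular $MU_*(\MU)$ is flat over $\pi_*(\MU)$, which is the prerequisite for the pair to be a Hopf algebroid, and the identification $MU_*(\MU)\cong\pi_*(\MU)\otimes H_*(\MU)$ of the statement holds.

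Next I would produce the structure maps from the geometry of $\MU$ as a commutative ring spectrum with multiplication $\mu$. The two units $\eta_L,\eta_R\colon\MU\cong S\wedge\MU,\ \MU\wedge S\to\MU\wedge\MU$ give the left and right unit, with $\eta_L$ manifestly the standard inclusion; the product on $MU_*(\MU)$ comes from $\MU\wedge\MU$ being itself a commutative ring spectrum, which is the product "inherited from $H_*(\MU)$" under the identification above; the multiplication $\mu$ induces the augmentation $\e$, and since $\mu$ collapses the reduced homology of the second smash factor one gets $\e(B_i)=0$; inserting a unit in the middle, $\MU\wedge\MU\cong\MU\wedge S\wedge\MU\to\MU\wedge\MU\wedge\MU$, induces the coproduct $\Delta\colon MU_*(\MU)\to MU_*(\MU)\otimes_{\pi_*(\MU)}MU_*(\MU)$; and the swap of factors induces the conjugation. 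Checking the Hopf algebroid axioms (co-associativity, co-unit, unit relations, existence of conjugation) is then a formal diagram chase from the associativity and unit coherence of $(\MU,\mu)$.

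The substantive step is to identify these abstractly-defined maps with the formal-group-law operations, yielding the displayed formulas for $\eta_R$ and $\Delta$. Here one uses that $\MU$ carries a universal complex orientation, so by Quillen's theorem $\pi_*(\MU)$ is the Lazard ring and its associated formal group law $F$ is universal; pulling $F$ back along $\eta_L$ and $\eta_R$ gives two formal group laws $F_L,F_R$ over $MU_*(\MU)$. The orientation class, viewed in $MU^*(\MU)$ and dualized, supplies a canonical strict isomorphism between $F_L$ and $F_R$ whose underlying power series is $\gamma(x)=x+\sum_{i\ge1}B_i x^{i+1}$; unwinding the resulting relation $F_R(x,y)=\gamma\big(F_L(\gamma^{-1}(x),\gamma^{-1}(y))\big)$ and comparing it, over $\pi_*(\MU)\otimes\Q$, with Mishchenko's logarithm $\log_F(x)=\sum_{n\ge0}[\mathbb{CP}^n]x^{n+1}/(n+1)$ together with the identification of $[\mathbb{CP}^n]$ with its Hurewicz image, yields the displayed formula for $\eta_R$. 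Likewise, over $MU_*(\MU)\otimes_{\pi_*(\MU)}MU_*(\MU)$ there are three formal group laws and two composable strict isomorphisms, and $\Delta$ records their composite; tracking $\gamma$ through this composition produces the displayed coproduct formula.

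I expect the main obstacle to be precisely this last identification: carefully carrying the orientation class through the smash-product constructions and matching the resulting power-series combinatorics with the formulas as written, including normalizing conventions relating the $b_i$ to the coefficients of $\exp_F$ and $\log_F$. This is exactly the content of \cite{ravenel}, A2.1, so I would follow that treatment, the only additional bookkeeping being to reconcile it with the $\MTU$-indexed conventions used in this paper (and in \cite{nguyen1}); by comparison, the collapse of the Künneth spectral sequence and the formal verification of the Hopf algebroid axioms are routine.
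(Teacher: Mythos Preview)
The paper does not prove this theorem at all: it is stated as background, prefaced by ``We begin by stating the following results about $MU$ theory originally developed by \cite{novikov} and described in detail in \cite{ravenel} Appendix 2.1,'' and no argument is given. Your sketch is precisely the standard proof from those references (flatness via K\"unneth/Atiyah--Hirzebruch collapse, structure maps from the ring-spectrum structure of $\MU$, and identification of $\eta_R$ and $\Delta$ via Quillen's theorem and the universal orientation), so there is nothing to compare --- you have supplied the argument the paper only cites.
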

It is typically convenient to localize at a prime $p$. The structure of $\MU$ localized at $p$ is described as follows. See \cite{ravenel} Appendix 2.1 for more details.

\begin{theorem}
	For each prime $p$, there is an associative ring spectrum $\mathbf{BP}$ which is a retract of $\MU_{(p)}$, moreover, $\MU_{(p)}$ splits as copies of $\mathbf{BP}$. The homotopy group $\pi_*(\mathbf{BP})$ is $\Z_{(p)}[v_1,v_2,...]$ where $v_i\in \pi_{2(p^i-1)}(\mathbf{BP})$. The pair $(\pi_*(\mathbf{BP}),BP_*(\mathbf{BP}))$ is a Hopf algebroid. We will write $BP_*(\mathbf{BP})=\pi_*(\mathbf{BP})[t_1,t_2,...]$ with element $t_i$ in degree $2(p^i-1)$. The algebroid has the structure:
	\begin{itemize}
		\item An augmentation map $\e: BP_*(\mathbf{BP})\ra \pi_*(\mathbf{BP})$, defined by $\e(t_i)=0$ and $\e(v_i)=v_i$.
		\item A left unit $\eta_L: \pi_*(\mathbf{BP})\ra BP_*(\mathbf{BP})$, defined as the standard inclusion.
		\item A right unit, $\eta_R: \pi_*(\mathbf{BP})\ra BP_*(\mathbf{BP})$, corresponding to the Hurewicz homomorphism, defined by the formula:
		\[\eta_R(m_i)= \sum_{j=1}^n m_j t_{i-j}^{p^i} \]
		\item A product inherited from $H_*(\mathbf{BP})$.
		\item A coproduct determined by:
		\[\sum_{i,j}m_i\Delta(t_j)^{p^i} = \sum_{i,j,k} \lambda_i t_j^{p^i}\otimes t_k^{p^{i+j}}\]
	\end{itemize}
	
	The basis $m_i$ comes from $[\C P^{p^i-1}]/p$. The generators $v_i$ are defined by 
	\[pm_n=\sum_{i=0}^{n-1} m_i v_{n-i}^{p^i}\]
	
\end{theorem}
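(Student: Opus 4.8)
The plan is to treat this as a classical structural result and to cite the established theory of Brown--Peterson homology rather than reprove it from scratch; all of the statements are assembled in Ravenel's Appendix 2.1 \cite{ravenel}, following Quillen, Adams, Araki and Hazewinkel. I would organize the argument as follows.

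First I would recall Quillen's idempotent: after localization at $p$, the universal formal group law over $\pi_*(\MU)_{(p)}$ can be naturally $p$-typicalized, and this produces a natural idempotent multiplicative operation on the spectrum $\MU_{(p)}$. Its image is the summand $\mathbf{BP}$, and naturality forces $\MU_{(p)}$ to split as a wedge of even suspensions of $\mathbf{BP}$. Second, with the splitting in hand, I would deduce $\pi_*(\mathbf{BP}) \cong \Z_{(p)}[v_1, v_2, \ldots]$ with $v_i$ in degree $2(p^i - 1)$ by comparing Poincar\'e series: one knows $\pi_*(\MU)_{(p)} \cong \Z_{(p)}[x_1, x_2, \ldots]$ together with the multiplicities of the suspensions occurring in the splitting, which pins down the rank of $\pi_*(\mathbf{BP})$ in each degree. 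The same comparison applied to $MU_*(\MU) \cong \pi_*(\MU)[b_1, b_2, \ldots]$ yields $BP_*(\mathbf{BP}) \cong \pi_*(\mathbf{BP})[t_1, t_2, \ldots]$ with $t_i$ in degree $2(p^i - 1)$.

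Third I would read off the Hopf algebroid structure from its moduli interpretation: $\pi_*(\mathbf{BP})$ carries the universal $p$-typical formal group law and $BP_*(\mathbf{BP})$ the universal strict isomorphism of two such, so the augmentation, the two units, the product and the coproduct are exactly the structure maps of this groupoid (unit/identity, source, target, composition of isomorphisms). Writing everything in coordinates via the $p$-typical logarithm $\log_{\mathbf{BP}}(x) = \sum_i m_i x^{p^i}$ produces the displayed formulas for $\eta_R$ and $\Delta$; the generators $v_i$ are then introduced by the Hazewinkel recursion $p m_n = \sum_{i=0}^{n-1} m_i v_{n-i}^{p^i}$, and a short induction shows they are polynomial generators of $\pi_*(\mathbf{BP})$ in the claimed degrees.

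I do not expect a genuine obstacle, since every claim is standard; the only delicate points in a self-contained treatment would be (i) verifying that $p$-typicalization defines an idempotent \emph{stable} operation, and not merely an operation on coefficient rings, and (ii) the bookkeeping needed to pass among the three generating systems $m_i$, $v_i$, $t_i$ through the functional equation for the logarithm. Since the sequel only uses the structure as stated, I would simply take it from \cite{ravenel}.
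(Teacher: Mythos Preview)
Your proposal is correct and matches the paper's treatment exactly: the paper does not prove this theorem but simply states it as background, citing \cite{ravenel} Appendix 2.1 for the details. Since the paper only uses the stated structure maps (notably $\eta_R(v_1)=v_1+2t_1$ and $\Delta(t_1)=t_1\otimes 1+1\otimes t_1$) in later computations, your plan to take the result from \cite{ravenel} is precisely what the author does.
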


We specifically note that $\eta_R(v_1)=v_1+2t_1$, and $\Delta(t_1)=t_1 \otimes 1+ 1\otimes t_1$.

	\begin{proposition}
		Let $X$ be a spectrum with torsion-free homology concentrated in even degrees. Let $E$ be either $\MU$ or $\mathbf{BP}$ for some prime $p$. Then the generalized theory $E_*(X)\cong \pi_*(E) \otimes_\Z H_*(X;\Z)$ and is torsion-free concentrated in even degrees.
	\end{proposition}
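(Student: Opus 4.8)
The plan is to use the Atiyah-Hirzebruch spectral sequence (AHSS) for the generalized homology theory $E_*(-)$ applied to $X$. Since $E$ is $\MU$ or $\mathbf{BP}$, its coefficient ring $\pi_*(E)$ is concentrated in even degrees and is torsion-free (it is $\Z[B_1, B_2, \ldots]$ or $\Z_{(p)}[v_1, v_2, \ldots]$). The $E_2$-page of the AHSS is $H_p(X; \pi_q(E))$, which by the universal coefficient theorem equals $H_p(X;\Z) \otimes \pi_q(E)$ since $H_*(X;\Z)$ is torsion-free; there is no $\mathrm{Tor}$ term. By hypothesis $H_*(X;\Z)$ is concentrated in even degrees, and $\pi_*(E)$ is concentrated in even degrees, so the entire $E_2$-page is concentrated in bidegrees $(p,q)$ with both $p$ and $q$ even. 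In particular, it is concentrated in total degrees that are even, and consecutive nonzero antidiagonals are separated by gaps.

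Next I would observe that all differentials must vanish. The $r$-th differential $d_r$ changes total degree by $-1$ and maps $E_r^{p,q} \to E_r^{p-r, q+r-1}$, so it necessarily sends a class in even total degree to a group in odd total degree, which is zero. Hence the spectral sequence collapses at $E_2$, and $E_\infty = E_2 = H_*(X;\Z) \otimes \pi_*(E)$. Then I would resolve the extension problem: the associated graded of $E_*(X)$ under the AHSS filtration is $H_*(X;\Z) \otimes \pi_*(E)$, which is a free $\Z$-module (or free $\Z_{(p)}$-module). A filtered abelian group whose associated graded pieces are all free is itself free — one can split the filtration by lifting basis elements degree by degree, since $\mathrm{Ext}^1(\text{free}, -) = 0$. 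This shows $E_*(X)$ is torsion-free, and since the grading of the associated graded is concentrated in even total degrees, so is $E_*(X)$.

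Finally I would upgrade the additive statement $E_*(X) \cong \pi_*(E) \otimes_\Z H_*(X;\Z)$ to the claimed form. The natural map $\pi_*(E) \otimes_{\pi_*(E)} E_*(X) \cong E_*(X)$ is trivial; the content is comparing with $\pi_*(E) \otimes_\Z H_*(X;\Z)$. One clean way: there is a natural transformation $\pi_*(E) \otimes_\Z H_*(X;\Z) \to E_*(X)$ (coming from the Hurewicz/unit map $H\Z \to$ ... or rather from the module structure together with the Thom/fundamental-class identification in each even degree), and one checks it is an isomorphism by comparing AHSS filtrations: it induces an isomorphism on associated graded pieces, hence is an isomorphism. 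Alternatively, since $X$ has a cell structure with only even cells (torsion-free even homology), one can argue directly by induction on the skeleta, using that the cofiber sequences split.

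The main obstacle I anticipate is not any single hard computation but rather the bookkeeping needed to make the natural isomorphism $\pi_*(E)\otimes_\Z H_*(X;\Z) \to E_*(X)$ genuinely natural and well-defined, rather than a non-canonical consequence of the collapse of the AHSS. This requires either invoking that $X$ admits an even cell structure (which follows from torsion-free even homology, essentially a theorem about such spectra) or citing a known result that for such $X$ the $E$-homology is the expected free module. Once that structural input is in hand, the AHSS degeneration and the freeness of the extension are routine.
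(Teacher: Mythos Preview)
Your proposal is correct and takes essentially the same approach as the paper: the paper's proof is a single sentence stating that the Atiyah-Hirzebruch spectral sequence collapses (citing Atiyah), and your argument simply fills in the details of that collapse, the vanishing of differentials for parity reasons, and the extension problem. Your discussion of naturality goes beyond what the paper bothers to justify, but it is not a different route.
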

	\begin{proof}
		The Atiyah-Hirzebruch spectral sequence collapses, giving the stated result. \cite{atiyah1961vector}
	\end{proof}

		Using Proposition \ref{homMTU}, we immediately conclude:
	
	\begin{corollary}
		Let $D$ be either $\MU$ or $\mathbf{BP}$ for some prime $p$.
		The map
		\[D_*(\MTU(d))\ra D_*(\MU)=\pi_*(E)[B_1,B_2,...]\]
		is an injection, which is an isomorphism in degrees less than $2d+2$. Moreover, both modules are concentrated in even degrees. The image of $D_*(\MTU(d))$ maps onto polynomials of degree $d$. 
		\[D_*(\MU)\ra D_*(\overline{\MTU}(d))\] is the quotient map killing all polynomials of degree $d$ or less. These maps induce natural structures as $D_*(D)$ comodules.
	\end{corollary}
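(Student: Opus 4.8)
The plan is to deduce everything formally from Proposition~\ref{homMTU} and the Atiyah--Hirzebruch collapse recorded just above it, and then to extract the $\overline{\MTU}(d)$ statement from the long exact sequence of the cofibration $\MTU(d)\ra\MU\ra\overline{\MTU}(d)$.

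First I would observe that $\MTU(d)$ and $\MU$ both have free integral homology concentrated in even degrees: for $\MU$ this is $H_*(\MU;\Z)\cong\Z[b_1,b_2,\dots]$, and for $\MTU(d)$ it follows from the Thom isomorphism $H_*(\MTU(d);\Z)\cong H_*(BU(d);\Z)$ of Proposition~\ref{homMTU}, since $H_*(BU(d);\Z)$ is a polynomial algebra on generators in even degrees. Now if $E$ is $\MU$ or $\mathbf{BP}$ then $\pi_*(E)$ lives in even degrees, so for any such $X$ the $E_2$-page $H_s(X;\pi_t E)$ of the Atiyah--Hirzebruch spectral sequence is supported in even total degree; every differential changes total degree by $1$, so all differentials vanish and the spectral sequence collapses, naturally in $X$. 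Consequently $E_*(X)$ carries a natural, degreewise-finite filtration with associated graded $\pi_*(E)\otimes_\Z H_*(X;\Z)$, which in particular is torsion-free and concentrated in even degrees. Applying naturality to $\MTU(d)\ra\MU$: the map is filtered, and on associated graded it is $\mathrm{id}_{\pi_*(E)}\otimes f_*$, where $f_*\colon H_*(\MTU(d);\Z)\hookrightarrow H_*(\MU;\Z)$ is the injection of Proposition~\ref{homMTU}. Since $\pi_*(E)$ is torsion-free, hence flat over $\Z$, the map $\mathrm{id}\otimes f_*$ is still injective, with image $\pi_*(E)\otimes(\text{span of the bounded-degree monomials named in Proposition~\ref{homMTU}})$; an exhaustive Hausdorff filtered map that is injective on associated graded is injective, and its image is the $\pi_*(E)$-submodule of $E_*(\MU)=\pi_*(E)[B_1,B_2,\dots]$ whose associated graded is the one just described. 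The remaining assertions are then read off: $E_*(\MTU(d))$ is concentrated in even degrees by the tensor description; in degrees $<2d+2$ every monomial of $H_*(\MU;\Z)$ of that degree already lies in the image of $f_*$, so the injection is onto there; and composition with the projection that kills the high-degree monomials is visibly surjective onto the polynomials of degree $d$.

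For the $\overline{\MTU}(d)$ statement I would run the long exact sequence in $E$-homology for $\MTU(d)\ra\MU\ra\overline{\MTU}(d)$. Because the first two terms vanish in odd degrees and the map between them is injective in even degrees, a short diagram chase forces $E_*(\overline{\MTU}(d))$ to vanish in odd degrees and identifies $E_n(\overline{\MTU}(d))$, for $n$ even, with the cokernel of $E_n(\MTU(d))\ra E_n(\MU)$. By the previous paragraph this cokernel is $\pi_*(E)[B_1,B_2,\dots]$ modulo the submodule generated by those low-degree monomials, and the induced map $E_*(\MU)\ra E_*(\overline{\MTU}(d))$ is precisely that quotient projection. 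Finally, the comodule claim requires no computation: $E$-homology of any spectrum is naturally an $E_*(E)$-comodule, and a map of spectra induces a map of comodules, so all three maps in the statement are $E_*(E)$-comodule maps by naturality.

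There is little genuine difficulty here, since the substantive input is Proposition~\ref{homMTU}. The only place that calls for care is the degree bookkeeping --- pinning down the grading convention $|B_i|=2i$ so that the bound "monomials of degree less than $d$" in Proposition~\ref{homMTU} is matched consistently with "polynomials of degree $d$ or less" and "degrees less than $2d+2$" in the present statement --- together with being explicit that the Atiyah--Hirzebruch collapse, and hence the filtered tensor description of $E_*(-)$, is natural for the maps in play, so that the filtered-module argument above is legitimate.
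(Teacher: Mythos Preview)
Your proposal is correct and is exactly the approach the paper intends: the paper offers no separate argument for this corollary, merely the line ``Using Proposition~\ref{homMTU}, we immediately conclude,'' and your write-up is precisely the unpacking of that deduction via the Atiyah--Hirzebruch collapse and the long exact sequence of the defining cofibration. Your flag about reconciling the grading conventions (``degree less than $d$'' in Proposition~\ref{homMTU} versus ``degree $d$ or less'' here) is well taken, as the paper is indeed sloppy on this point.
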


We will use the Adams-Novikov spectral sequence as our main tool to compute the cokernel of the map $\pi_*(\MTU(d))\ra \pi_*(\MU)$.
The $p$-local Adams-Novikov spectral sequence computes the homotopy groups of a finite type spectrum $X$ as follows, see \cite{novikov} or \cite{ravenel}.

\begin{theorem} 
	For a spectra of finite type $X$, there is a spectral sequence $\{E^{s,t}_k(X),d_k\}$ with differentials $d_k:E^{s,t}_k(X) \ra E^{s+r,t+r-1}_k(X)$: such that:
	\begin{enumerate}
		\item $E^{s,t}_2(X)\cong \Ext^{s,t}_{BP_*(\mathbf{BP})}(\pi_*(\mathbf{BP}), BP_*(X))$
		\item If $X$ is connective and $p$ local, there is a filtration
		\[ \pi_{t-s}(X)= F^{0,t-s} \sp ... \sp F^{s,t}\sp F^{s+1,t+1}\sp ...\] such that $E^{s,t}_\infty(X)=F^{s,t}/ F^{s+1,t+1}$.
	\end{enumerate}
\end{theorem}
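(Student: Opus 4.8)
The statement is the construction and basic convergence of the $p$-local Adams--Novikov spectral sequence. The plan is to produce it as the $E$-based Adams spectral sequence for $E=\mathbf{BP}$ (equivalently, after localization at $p$, for $E=\MU$), following the classical treatment of \cite{novikov} and \cite{ravenel}, Chapter 2. I will carry out the three standard steps: build the canonical resolution and extract the spectral sequence, identify the $E_2$-page with $\Ext$, and verify convergence.

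First I would form the canonical $E$-Adams resolution of $X$. Let $\overline{E}$ denote the fiber of the unit map $S^0\ra E$. Smashing with $X$ and iterating yields a tower
$$\cdots \ra X_{s+1}\ra X_s\ra\cdots\ra X_0=X,\qquad X_s:=\overline{E}^{\wedge s}\wedge X,$$
equipped with cofiber sequences $X_{s+1}\ra X_s\ra E\wedge X_s$. Applying $\pi_*$ to this tower produces an exact couple, hence a spectral sequence with
$$E^{s,t}_1(X)=\pi_{t-s}\big(E\wedge\overline{E}^{\wedge s}\wedge X\big)$$
and differentials $d_k\colon E^{s,t}_k(X)\ra E^{s+k,\,t+k-1}_k(X)$ of the stated bidegree. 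Because every step of the construction is functorial in $X$, the resulting spectral sequence is natural in $X$, and the $d_1$ differential coming from the cofiber sequences makes the $E_1$-term into a cochain complex.

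Second, I would identify the $E_2$-page. The decisive input is that $E_*(E)$ is flat as a left $\pi_*(E)$-module --- valid for both $E=\MU$ and $E=\mathbf{BP}$ --- so that $(\pi_*(E),E_*(E))$ is a Hopf algebroid with the structure recalled in the previous section, and $E_*(X)$ is naturally an $E_*(E)$-comodule. Applying $E_*(-)$ to the resolution and using flatness to compute the iterated smash products, $(E^{*,*}_1(X),d_1)$ is identified with the normalized cobar complex of the comodule $E_*(X)$, whose cohomology is by definition $\Ext^{s,t}_{E_*(E)}(\pi_*(E),E_*(X))$; this gives $E^{s,t}_2(X)\cong\Ext^{s,t}_{E_*(E)}(\pi_*(E),E_*(X))$. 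To obtain item (1) exactly as stated, I invoke the splitting of $\MU_{(p)}$ into a wedge of suspensions of $\mathbf{BP}$, which produces a change-of-rings isomorphism identifying $\Ext_{MU_*(\MU)}(\pi_*(\MU),\MU_*(X))$, localized at $p$, with $\Ext_{BP_*(\mathbf{BP})}(\pi_*(\mathbf{BP}),BP_*(X))$.

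Finally, convergence. For $X$ connective, $p$-local and of finite type, the point is that $\overline{E}=\mathrm{fib}(S^0_{(p)}\ra\mathbf{BP})$ is connective with $\pi_0(\overline{E})=0$ (the unit is an isomorphism on $\pi_0$ and $\pi_1(\mathbf{BP})=0$), so the connectivity of $X_s=\overline{E}^{\wedge s}\wedge X$ grows without bound as $s\to\infty$. Hence the homotopy inverse limit of the tower $\{X_s\}$ is contractible and the associated $\varprojlim^1$-obstruction vanishes; the standard bookkeeping for a tower of this type (as in \cite{ravenel}) then shows that the spectral sequence converges strongly to $\pi_*(X)$, with decreasing filtration $F^{s,t}=\Img\big(\pi_{t-s}(X_s)\ra\pi_{t-s}(X)\big)$, establishing item (2). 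In a fully rigorous account the main work is precisely this last step --- strong convergence and the stated identification of the filtration --- together with the verification that $E_*(E)$ is flat over $\pi_*(E)$; both are classical, and the whole construction is exactly the $BP$-Adams spectral sequence of \cite{novikov,ravenel}.
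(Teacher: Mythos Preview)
Your proposal is a correct sketch of the standard construction, and it is precisely the treatment in the references the paper cites: the paper does not supply its own proof of this theorem but merely quotes it from \cite{novikov} and \cite{ravenel}. So there is nothing to compare---your outline of the canonical $\mathbf{BP}$-Adams resolution, the cobar identification of $E_2$, and the connectivity argument for convergence is exactly the argument in \cite{ravenel}, Chapter~2, to which the paper defers.
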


Moreover, this construction is natural with respect to maps of spectra.
For an element in $E_q^{s,t}$, we will refer to $s$ as the resolution of the element and $t$ as the degree. The element, if it survives, corresponds to an element of $\pi_{t-s}$.
Consider the Adams-Novikov spectral sequence for $\MU_{(p)}$. Since $\MU_{(p)}$ splits as a copy of $\mathbf{BP}$'s, $E_2^{s,*}(\MU_{(p)})=0$ for $s>0$. So the spectral sequence collapses, and $E_2^{0,*}(\MU_{(p)})\cong\pi_*(\MU_{(p)})$. In particular, it is a free $\Z_{(p)}$ module. Moreover, by basic properties of $\Ext$ groups, \[E_2^{0,*}(\MTU(d)_{(p)})\cong \Hom(BP_*(\MTU(d)_{(p)}), \pi_*(\mathbf{BP})) \] The image of $\pi_*(\MTU(d)_{(p)})\ra \pi_*(\MU_{(p)})$ is concentrated in the 0 resolution. Thus, we first need to know the image of the map $E_2^{0,*}(\MTU(d))\ra E_2^{0,*}(\MU)$. This is the map \[ \Hom_{BP_*(\mathbf{BP})}(\pi_*(\mathbf{BP}),BP_*(\MTU(d)_{(p)})) \ra \Hom_{BP_*(\mathbf{BP})}(\pi_*(\mathbf{BP}),BP_*(\MU_{(p)}))\]

This map has cokernel which is a free $\Z_{(p)}$ module. Since all differentials are zero in the Adams-Novikov spectral sequence for $\MU_{(p)}$, the following proposition is concluded:

\begin{proposition}\label{diff}
	Fix prime $p$. Suppose $p[M]\in \pi_{*}(\MU)$ lifts to $\pi_*(\MTU(d))$. Then $[M]$ lifts to an element $x\in E_2^{0,*}(\MTU(d))$ via the map: \[\Hom_{BP_*(\mathbf{BP})}(\pi_*(\mathbf{BP}),BP_*(\MTU(d)_{(p)})) \ra \pi_*(\MU_{(p)})\] Moreover, $[M]$ lifts to $\pi_*(\MTU(d))$ if and only if there is no Adams differential originating from $x$. 
\end{proposition}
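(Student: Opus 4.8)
The plan is to play off the Adams--Novikov edge homomorphism against the forgetful map $f\colon\MTU(d)\to\MU$, exploiting that the ANSS for $\MU_{(p)}$ collapses onto the resolution-zero line. I work $p$-locally throughout, so ``lifts'' means ``lifts after localization at $p$''; the integral statement is then recovered by standard arithmetic fracture, using that $p$ is invertible away from $p$ and that $f_*$ is rationally injective (see below). First I would record that $\MTU(d)$ is connective (by Lemma~\ref{iso}, $\pi_q\MTU(d)=\pi_q\MU=0$ for $q<0$) and of finite type (Proposition~\ref{homMTU}), so the ANSS converges, and that the edge homomorphism $e_X\colon\pi_*(X)_{(p)}\twoheadrightarrow E_\infty^{0,*}(X)\hookrightarrow E_2^{0,*}(X)$ has kernel equal to the filtration subgroup $F^{1,*}$. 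Naturality of $e$ along $f$ gives a commuting square: along the top $e_{\MTU(d)}$, along the bottom $e_{\MU}$, down the left $f_*$, and down the right the map of the statement, $E_2^{0,*}(\MTU(d))\to E_2^{0,*}(\MU_{(p)})=\pi_*(\MU_{(p)})$, which is the map induced on comodule primitives by the injection $BP_*(\MTU(d))\hookrightarrow BP_*(\MU)$. Two facts, both established above, make the argument go: (i) for $\MU_{(p)}$ the ANSS has $E_\infty^{s,*}=0$ for $s>0$, so $F^{1,*}=0$ and $e_{\MU}$ is an isomorphism; (ii) the right-hand map is injective --- by left exactness of $\Hom_{BP_*(\mathbf{BP})}(\pi_*(\mathbf{BP}),-)$ --- with torsion-free cokernel.

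For the existence of $x$: if $p[M]=f_*(y)$ with $y\in\pi_*(\MTU(d))$, then chasing the square shows that the right-hand map carries $e_{\MTU(d)}(y)$ to $p[M]$, so $p[M]$ lies in its image; since the cokernel is torsion-free, $[M]$ lies there too, and by injectivity there is a unique $x\in E_2^{0,*}(\MTU(d))$ mapping to $[M]$.

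For the equivalence: since ANSS differentials strictly raise resolution, nothing hits the resolution-zero line, so $x$ is a permanent cycle if and only if $d_k(x)=0$ for all $k$, i.e.\ if and only if no differential originates from $x$; and when this holds, $x$ lies in the image of $e_{\MTU(d)}$, say $x=e_{\MTU(d)}(y)$. Then, chasing the square, $e_{\MU}(f_*(y))$ equals the image of $x$, namely $[M]$, and $e_{\MU}$ being an isomorphism forces $f_*(y)=[M]$, so $[M]$ lifts. Conversely, if $[M]=f_*(y)$, then $e_{\MTU(d)}(y)\in E_2^{0,*}(\MTU(d))$ maps to $[M]$, so by uniqueness $e_{\MTU(d)}(y)=x$, whence $x$ is a permanent cycle and no differential originates from it.

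For the last sentence: the square gives $\ker e_{\MTU(d)}=F^{1,*}\subseteq\ker f_*$, and Proposition~\ref{homMTU} (injectivity of $H_*(\MTU(d);\Z)\to H_*(\MU;\Z)$) shows $f_*$ is rationally injective, so $\ker f_*$ --- hence $F^{1,*}$ --- is all torsion; therefore the free part of $\pi_*(\MTU(d))$ injects into $E_\infty^{0,*}(\MTU(d))$, i.e.\ comes from the zero line. I expect the one genuine obstacle to be bookkeeping rather than mathematics: getting the identifications in the naturality square exactly right --- in particular that $e_{\MU}$ is literally an isomorphism onto $E_2^{0,*}(\MU_{(p)})\cong\pi_*(\MU_{(p)})$ and that the displayed $\Hom$-map is precisely its right-hand edge --- and confirming the uniqueness of $x$ so that ``the $x$ lifting $[M]$'' is well posed. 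Everything else (the injection $BP_*(\MTU(d))\hookrightarrow BP_*(\MU)$, the torsion-freeness of its cokernel on primitives, and the rational injectivity of $f_*$) is already available in the preceding pages.
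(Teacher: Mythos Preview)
Your proposal is correct and follows essentially the same approach as the paper: the paper's ``proof'' is just the paragraph preceding the proposition, which records that the ANSS for $\MU_{(p)}$ collapses onto the zero line and that the induced map on comodule primitives $E_2^{0,*}(\MTU(d))\to E_2^{0,*}(\MU_{(p)})=\pi_*(\MU_{(p)})$ has free $\Z_{(p)}$-cokernel, and then asserts that the proposition follows. You have written out in full the edge-homomorphism/naturality-square argument that the paper leaves implicit, including the uniqueness of $x$ and the torsion claim for $F^{1,*}$, so your write-up is more complete than what appears in the paper itself.
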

\begin{proof}
	Let $[M]\in \pi_*(\MU)$, where $p[M]\in \pi_{*}(\MU)$ lifts to $\pi_*(\MTU(d))$. We may view $[M]$ as an element $z\in E_2^{0,*}(\MU)$. We know $p[M]$ can be viewed as an element in $\pi_*(\MTU(d))$, which lifts to $y\in E_2^{0,*}(\MTU(d))$. By naturality of the Adams Novikov spectral sequence $y\mapsto pz$. Since the cokernel of $ E_2^{0,*}(\MTU(d))\ra  E_2^{0,*}(\MU)$ is free and the map is injective, there must be a unique element $x\mapsto z$. The element $x$ survives to the $E_\infty$ page if and only if all Adams differentials vanish on $x$.
\end{proof}
Thus, the differentials in the Adams-Novikov spectral sequence for $\MTU(d)$ correspond with torsion obstructions. We will use the cobar complex in order to compute the $E_2$ page. See Appendix 1 in \cite{ravenel} for more details. If we wished to extend this computation further, it would be beneficial to use a more sophisticated method. Recall that $BP_*(\mathbf{BP})$ has an action of $\pi_*(\mathbf{BP})$ given by multiplication by the image of $\eta_R$. Let $\overline{\Gamma}$ be the quotient of $BP_*(\mathbf{BP})$ the image of $\eta_L$.

\begin{definition}
	Let $(M,\psi)$ be a left $BP_*(\mathbf{BP})$ comodule. Define the cobar complex as the tensor product $C_{BP}^s(M)=\overline{\Gamma}^{\otimes s}\otimes_{\pi_*(\mathbf{BP})}M $.
	The differential in the complex is $d_1:C_{BP}^s(M)\ra C_{BP}^{s+1}(M)$, given by:
	\[d_1(\gamma_1 \otimes ... \otimes \gamma_s \otimes m)=\sum_{i=1}^s (-1)^i \gamma_1\otimes...\otimes \Delta(\gamma_i)\otimes...\otimes m + (-1)^{s+1}\gamma_1\otimes...\otimes \gamma_s \otimes \psi(m) \]
\end{definition}
\begin{proposition}
	The homology of the cobar complex is the Ext groups of $M$, specifically: \[H(C_{BP}^*(M))=\Ext_{BP_*(\mathbf{BP})}(\pi_*(\mathbf{BP}),M)\]
\end{proposition}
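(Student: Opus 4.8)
The plan is to identify $C_{BP}^*(M)$ with the complex obtained by applying the ``comodule primitives'' functor $\Hom_{BP_*(\mathbf{BP})}(\pi_*(\mathbf{BP}),-)$ to a canonical relative injective resolution of $M$ in the category of $BP_*(\mathbf{BP})$-comodules, and then to invoke the definition of $\Ext$ as the associated relative derived functor. This is the standard Hopf-algebroid cobar argument (Ravenel, Appendix A1.2), and one could simply cite it; what follows sketches the steps.

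First I would set up the formal framework. Since $BP_*(\mathbf{BP})$ is flat over $\pi_*(\mathbf{BP})$, the category of $BP_*(\mathbf{BP})$-comodules is abelian and the cofree-comodule functor $N\mapsto BP_*(\mathbf{BP})\otimes_{\pi_*(\mathbf{BP})}N$ is right adjoint to the forgetful functor to $\pi_*(\mathbf{BP})$-modules. Hence cofree comodules are relative injective and acyclic for $\Hom_{BP_*(\mathbf{BP})}(\pi_*(\mathbf{BP}),-)$, and moreover there is a natural isomorphism $\Hom_{BP_*(\mathbf{BP})}(\pi_*(\mathbf{BP}),\,BP_*(\mathbf{BP})\otimes_{\pi_*(\mathbf{BP})}N)\cong N$, i.e.\ taking primitives of a cofree comodule recovers the underlying module.

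Second, I would construct the cobar resolution: set $D^s(M)=BP_*(\mathbf{BP})\otimes_{\pi_*(\mathbf{BP})}\overline{\Gamma}^{\otimes s}\otimes_{\pi_*(\mathbf{BP})}M$ for $s\geq 0$, a cofree comodule via its leftmost factor, with coface maps assembled from the coproduct $\Delta$ on $BP_*(\mathbf{BP})$, the comodule structure map $\psi$ on $M$, and the projection $BP_*(\mathbf{BP})\to\overline{\Gamma}$, together with the augmentation $M\to D^0(M)$, $m\mapsto\psi(m)$. The essential claim is that $0\to M\to D^0(M)\to D^1(M)\to\cdots$ is exact. I expect this to be the only step requiring genuine work: one proves exactness by writing down an explicit contracting homotopy at the level of underlying $\pi_*(\mathbf{BP})$-modules, which exists because the unit $\pi_*(\mathbf{BP})\to BP_*(\mathbf{BP})$ is split by the augmentation $\e$, allowing one to strip off the leftmost tensor factor. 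The verification is then purely formal, using only the Hopf-algebroid axioms recorded above.

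Finally, I would apply $\Hom_{BP_*(\mathbf{BP})}(\pi_*(\mathbf{BP}),-)$ to $D^\bullet(M)$; by the natural isomorphism of the first step each $D^s(M)$ becomes $\overline{\Gamma}^{\otimes s}\otimes_{\pi_*(\mathbf{BP})}M=C_{BP}^s(M)$, and a direct bookkeeping check identifies the induced differential with the signed sum $d_1$ of the definition. Since $D^\bullet(M)$ is a resolution of $M$ by relative injectives, the cohomology of its image under $\Hom_{BP_*(\mathbf{BP})}(\pi_*(\mathbf{BP}),-)$ computes the relative derived functor by definition, so $H(C_{BP}^*(M))=\Ext_{BP_*(\mathbf{BP})}(\pi_*(\mathbf{BP}),M)$, as claimed.
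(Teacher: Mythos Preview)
Your proposal is correct: this is precisely the standard cobar-resolution argument for Hopf algebroids as in Ravenel, Appendix A1.2, and there are no gaps in the outline. The paper itself does not supply a proof of this proposition at all; it merely states the result after pointing the reader to Ravenel's Appendix~1, so your sketch is strictly more detailed than what appears here and follows the same route as the cited reference.
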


It will typically be convenient to view the cobar complex as the $E_1$ page of the Adams-Novikov spectral sequence where the differential corresponds to $d_1$. We observe that the $0$ line of the $E_2$ page of the Adams-Novikov spectral sequence is generated by primitives, i.e. elements of the form $\psi(x)=1\otimes x$ where $\psi$ is the coaction. The following proposition will be used to compute $E_2(\MTU(d))$

\begin{proposition}\label{shift}
	There is an isomorphism for $s\geq 1$:
	\[E_*^{s,t}(\overline{\MTU}(d))\cong E_*^{s+1,t}(\MTU(d))\] 
\end{proposition}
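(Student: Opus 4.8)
The plan is to obtain the asserted isomorphism as the connecting homomorphism in the long exact sequence of $\Ext$-groups attached to the cofibration defining $\ov{\MTU}(d)$, and then to promote it to an isomorphism on each page of the Adams--Novikov spectral sequence. Fix the prime $p$; since the Adams--Novikov $E_2$-term is built from $\mathbf{BP}$, everything below is implicitly $p$-localised. The cofibration $\MTU(d)\ra\MU\ra\ov{\MTU}(d)$ induces a long exact sequence in $BP_*(-)$, and by the Corollary following Proposition~\ref{homMTU} the first map is injective while the second is the quotient of $BP_*(\MU)=\pi_*(\mathbf{BP})[B_1,B_2,\dots]$ by its image. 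As all three $BP$-homology modules are concentrated in even degrees, the long exact sequence breaks into a short exact sequence of $BP_*(\mathbf{BP})$-comodules
$$0\ra BP_*(\MTU(d))\ra BP_*(\MU)\ra BP_*(\ov{\MTU}(d))\ra 0 .$$

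Next I would feed this into the cobar construction. Since $\ov{\Gamma}$ is a free $\pi_*(\mathbf{BP})$-module, the functor $C_{BP}^s(-)=\ov{\Gamma}^{\otimes s}\otimes_{\pi_*(\mathbf{BP})}(-)$ is exact, so the short exact sequence of comodules produces a short exact sequence of cobar complexes, hence a long exact sequence in their homology, that is in $E_2^{*,*}(-)=\Ext_{BP_*(\mathbf{BP})}(\pi_*(\mathbf{BP}),-)$:
$$\cdots\ra E_2^{s,t}(\MU)\ra E_2^{s,t}(\ov{\MTU}(d))\xra{\delta}E_2^{s+1,t}(\MTU(d))\ra E_2^{s+1,t}(\MU)\ra\cdots ,$$
in which the boundary map $\delta$ preserves the internal degree $t$ and raises the homological degree by one. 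Because $\MU_{(p)}$ splits as a wedge of suspensions of $\mathbf{BP}$, the comodule $BP_*(\MU_{(p)})$ is extended, so $E_2^{s,*}(\MU_{(p)})=0$ for $s\geq 1$, as recorded before Proposition~\ref{diff}. Hence for $s\geq 1$ both $E_2^{s,t}(\MU)$ and $E_2^{s+1,t}(\MU)$ vanish and $\delta$ is an isomorphism $E_2^{s,t}(\ov{\MTU}(d))\xra{\cong}E_2^{s+1,t}(\MTU(d))$; the hypothesis $s\geq 1$ is sharp, since at $s=0$ the term $E_2^{0,*}(\MU_{(p)})=\pi_*(\MU_{(p)})$ does not vanish and $\delta$ is then only surjective.

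To extend this from the $E_2$-page to every page, I would upgrade the cofibration to a long exact sequence of Adams--Novikov spectral sequences. Applying the standard $\mathbf{BP}$-based Adams tower to $\MTU(d)\ra\MU\ra\ov{\MTU}(d)$ --- using that smashing a cofibre sequence with each $\ov{\mathbf{BP}}^{\wedge s}$, where $\ov{\mathbf{BP}}$ is the fibre of $S^0\ra\mathbf{BP}$, again yields a cofibre sequence --- gives, at every page $E_r$, a long exact sequence whose boundary preserves $t$ and raises $s$ by one, and this boundary intertwines $d_r$ on $\ov{\MTU}(d)$ with $d_r$ on $\MTU(d)$, the degrees matching because $d_r$ sends $(s,t)$ to $(s+r,t+r-1)$ and $(s+r)+1=(s+1)+r$. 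Since the Adams--Novikov spectral sequence of $\MU_{(p)}$ collapses at $E_2$ onto the zero line, $E_r^{s,t}(\MU)=0$ for all $s\geq 1$ and all $r\geq 2$, so the boundary maps are isomorphisms $E_r^{s,t}(\ov{\MTU}(d))\cong E_r^{s+1,t}(\MTU(d))$ on every page, and, passing to the limit, also for $r=\infty$; see \cite{ravenel} for the relevant long exact sequence of spectral sequences.

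The point demanding the most care is this last step: the algebra of the first two paragraphs yields the isomorphism only on $E_2$, and promoting it requires knowing that a cofibration induces a genuine long exact sequence of Adams--Novikov spectral sequences compatible with all differentials --- in particular one must check that the comparison interacts correctly with differentials into and out of the zero line --- rather than merely a long exact sequence of $E_2$-terms. If only the $E_2$ statement is needed downstream, the proof reduces to the elementary homological algebra above, whose only non-formal ingredient is the short-exactness of the comodule sequence, i.e. the Corollary following Proposition~\ref{homMTU}.
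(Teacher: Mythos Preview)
Your proof is correct and follows essentially the same approach as the paper: invoke the long exact sequence of Adams--Novikov spectral sequences associated to the cofibration $\MTU(d)\ra\MU\ra\ov{\MTU}(d)$ (the paper cites this as Theorem~2.3.4 of \cite{ravenel}) and use that $E_*^{s,t}(\MU)=0$ for $s>0$. Your separate $E_2$ argument via the cobar complex is fine but redundant, since the exact sequence of spectral sequences already gives this; the paper simply states the long exact sequence and the vanishing in one line.
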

\begin{proof}
	By Theorem 2.3.4 \cite{ravenel}, there is an exact sequence of spectral sequences \[...\ra E_*^{s,t}(\MTU(d))\ra  E_*^{s,t}(\MU)\ra E_*^{s,t}(\overline{\MTU}(d))\ra E_*^{s+1,t}(\MTU(d))\ra ...\]
	Since $E_*^{s,t}(\MU)$ for $s>0$, the result follows.\end{proof}
	We can now prove Theorem \ref{torsobs}.
	\begin{theorem}\label{torsobsbody}
		The $p$-torsion obstruction to the existence of $r$ complex sections vanishes when $r<p^2-p$.
	\end{theorem}
	\begin{proof}
		Fix prime $p>2$. Consider the cobar complex for $\overline{\MTU}(d)$. Suppose we have some element $\beta\in E_2^{2,t}(\overline{\MTU}(d))$ for $t<2(p^2-p+d+1)$ This element is represented by the tensor sum $\sum_{k} \mu_k \otimes x_k \in \Gamma^{\otimes s} \otimes BP_*(\MTU(d))$ such that $\sum_{k} \mu_k \otimes x_k\in \Ker(d_1)$. The element $\sum_{k} \mu_k \otimes x_k$ has a term with maximal degree $x_k$. We will call this term $\mu_i \otimes x_i$. Since the sum $\sum_{k} \mu_k \otimes x_k\in \Ker(d_1)$, we see that $d_1(\mu_i)$ must be $0$. We know $\mu_i$ has degree less than $2(p^2-p)$ because $x_i$ has degree greater than or equal to $(2d+2)$. The element $\mu_i \otimes 1$ can be considered as an element in the cobar complex for the sphere spectrum, and since it is in $\Ker(d_1)$, it represents an element in $E_2(S^0)$. By \cite{zahler}[7.1], this element must be zero. Thus $\mu_i \otimes 1$ is in the image of $\sum_j \tilde{\mu}_j \otimes \nu_j$ where $\tilde{\mu}_j\in \overline{\Gamma}^{\otimes s-1}$ and $\nu_j\in \pi_*(BP)$. Then, we observe that $d_1(\sum_j \tilde{\mu}_j \otimes \nu_jx_i)= \mu_i\otimes x_i $. Thus, $\sum_{k\neq i} \mu_k \otimes x_k$ also represents $\beta$. Repeating this process, $\beta$ can be represented by $0$, and thus $\beta=0$. (We choose the letter $\beta$ suggestively, as the first potentially non zero $\beta$ corresponds to $\beta_1$ constructed in Definition 1.3.14 \cite{ravenel}). By Proposition \ref{shift}, $E_2^{3,t}(\MTU(d))=0 $ for $t<2(p^2-p+d+1)$. Thus, $d_3: E_3^{0,t}\ra E_3^{3,t+2}$ is trivial for $t<2(p^2-p+d)$. The same argument shows that all higher differentials are zero. (In fact, the bound is even stronger.) By Proposition \ref{diff}, the $p$-torsion obstruction to $r<p^2-p$ complex sections vanishes.
	\end{proof}	
	Suppose $t$ is such that $E^{s,q}_2(\Sigma^{\infty}S^{0})=0$ for all $q\leq t$. In particular for $s=2$, this is equivalent to supposing $t<2(p^2-p)$.
	\begin{remark}
		This result is not sharp, and we can improve it significantly by closer analysis. However, this bound is sufficient for this paper, since it will show the vanishing of all $p>2$ torsion obstructions for $r<6$.
	\end{remark}
	
	We now focus on the case $p=2$. For the rest of the paper all homotopy groups are assumed to be 2-primary. We will need to compute the homotopy groups of $\overline{\MTU}(d)$. To do so we remind ourselves of the Adams-Novikov spectral sequence for the sphere spectrum: \cite{zahler}
	
	\begin{center}
		\begin{tikzpicture}
			\node (v1) at (-0.12,0) {};
			\node (v0) at (0,-0.12) {};
			\node (v2) at (6.5,0) {};
			\draw[->]  (v1) edge (v2);
			\node (v3) at (0,5) {};
			\draw[->] (v0) edge (v3);
			\node at (-0.5,0.5) {0};
			\node at (-0.5,1.5) {1};
			\node at (-0.5,2.5) {2};
			\node (v4) at (0,1) {};
			\node (v5) at (6.5,1) {};
			\draw  (v4) edge (v5);
			\node at (0.5,-0.5) {$0$};
			\node at (2.5,-0.5) {$2$};
			\node at (4.5,-0.5) {$4$};
			\node at (1.5,1.5) {$\Z/2$};
			\node at (4.5,2.5) {$0$};
			\node at (2.5,2.5) {$\Z/2$};
			\node at (-0.5,3.5) {3};
			
			\node at (3.5,3.5) {$\Z/2$};
			\node at (3.5,1.5) {$\Z/4$};
			\node at (0.5,0.5) {$\Z$};

			\node at (-0.5,4.5) {$4$};
			\node (v7) at (4.5,4.5) {$\Z/2$};
			\node (v6) at (5.5,1.5) {$\Z/2$};
			\draw[->]  (v6) edge (v7);
			\node at (-0.5,5) {$s$};
			\node at (6.5,0.5) {$t-s$};
			\node at (6.5,2.5) {$\Z/2$};
			\node at (5.5,3.5) {0};
			\node at (5.5,-0.5) {5};
			\node at (6.5,-0.5) {6};
			\node at (1.5,-0.5) {1};
			\node at (3.5,-0.5) {3};
		\end{tikzpicture}
	\end{center}
	In particular, we will call the generators of $\pi_1^s(S^0)$ and $\pi_3^s(S^0)$, $h_1$ and $h_2$ respectively. These can be represented by the Hopf maps.
	\begin{theorem}\label{homgroups}
		The homotopy groups of $\overline{\MTU}(d)$ for $d\geq3$ are given by: 
		\[\begin{array}{cccccc}
			q & 2d & 2(d+1) & 2(d+2) & 2(d+3) & 2(d+4) \\
			d\equiv 0\ (\tn{mod}\ 2)& \Z & \Z/2 &  \Z/2 \oplus \Z^{2}& ?  & \Z^{4}\\
			d\equiv 1\ (\tn{mod}\ 2)& \Z & 0 & \Z^{2} & 0 & \Z^{4}\\
		\end{array} \]
	\end{theorem}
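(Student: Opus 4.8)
The plan is to compute the homotopy groups $\pi_q(\overline{\MTU}(d))$ for $q$ in the range $2d \le q \le 2(d+4)$ by exploiting the two structural descriptions of $\overline{\MTU}(d)$ available from Section 2: the cofibration $\MTU(d+1,1) \to \MTU(d+2,2) \to \cdots$ whose colimit is $\overline{\MTU}(d)$, together with the explicit splitting $\MTU(d,1) \simeq S^{\infty+2d} \vee \Sigma^{\infty+2d} BU(d)$ from Theorem \ref{MTUd1}, and the dimension-shifting isomorphism $E_*^{s,t}(\overline{\MTU}(d)) \cong E_*^{s+1,t}(\MTU(d))$ from Proposition \ref{shift}. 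Since all homotopy is now 2-primary, I would set up the Adams--Novikov spectral sequence for $\overline{\MTU}(d)$ via the cobar complex $C^*_{BP}(BP_*(\overline{\MTU}(d)))$, using the corollary that $BP_*(\overline{\MTU}(d))$ is the quotient of $BP_*(\MU) = \pi_*(BP)[B_1,B_2,\dots]$ by the submodule of polynomials of degree $\le d$, with its induced comodule structure. Concretely the relevant $BP_*(BP)$-comodule in low degrees is spanned by monomials in the $B_i$ of total degree exactly $d+1, d+2, d+3, d+4$, and the coaction is governed by $\eta_R(v_1) = v_1 + 2t_1$ and $\Delta(t_1) = t_1\otimes 1 + 1\otimes t_1$ together with the formulas for $\Delta(B_i)$ from the Hopf algebroid structure theorem.

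The first step is the bottom row: $\pi_{2d}(\overline{\MTU}(d)) = \Z$ generated by the Thom class of the unique degree-$(d+1)$-related generator, which is clean since $E_2^{s,2d}$ vanishes for $s>0$ by connectivity and the $E_2^{0,*}$ computation (this is essentially Lemma \ref{iso} combined with the cofibre sequence). The second step is to handle the parity split. For $d$ odd, I expect the relevant $BP_*(BP)$-primitives in degrees $2(d+1)$ and $2(d+3)$ to be absent because the polynomial generators in odd total degree combine with the odd shift in a way that forces the $E_2^{0}$ and $E_2^{1}$ contributions to vanish — this should be traced through by checking that $\pi_1^s(S^0) = \Z/2$ and $\pi_3^s(S^0) = \Z/2$ contributions, which via Proposition \ref{shift} would appear at resolution $s=1$, are killed by the left-unit/right-unit discrepancy $\eta_R(v_1) - \eta_L(v_1) = 2t_1$ acting on the odd-degree generators. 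For $d$ even, those $\Z/2$'s survive: the $\Z/2$ in degree $2(d+1)$ comes from $h_1$ (equivalently $\alpha_1$ in Adams--Novikov language) hitting the class $B_1^{(d+1)/?}$-type generator, and in degree $2(d+2)$ we get $\Z/2 \oplus \Z^2$ where the free part is the two primitives of total polynomial degree $d+2$ and the $\Z/2$ is again an $h_1$-multiple; degree $2(d+4)$ gives $\Z^4$ from the four primitives of total degree $d+4$ with all torsion cancelling. Counting the number of monomials of each total degree (partition counts) gives the ranks $1, 0\text{ or }1, 2, \ast, 4$.

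The main obstacle is the entry marked ``$?$'' at $q = 2(d+3)$ for $d$ even, and more generally pinning down precisely which Adams--Novikov differentials or hidden extensions occur — that is, distinguishing between $\Z/2 \oplus \Z/2$, $\Z/4$, and intermediate possibilities for the torsion subgroup in that degree. Resolving this requires carefully computing $d_1$ (the cobar differential) on the relevant piece of $C^1_{BP}$ and then $d_3$ via Proposition \ref{shift} reducing it to a known differential in the sphere's Adams--Novikov spectral sequence (the $d_3$ hitting $\alpha_1^3$-type classes), and it may genuinely be left indeterminate in this theorem, as the ``$?$'' in the statement suggests; for the applications (Theorems \ref{3vf} and \ref{4vfodd}) only the odd-$d$ row and the torsion-free parts of the even-$d$ row are actually needed, so I would prove those completely and record the even-$d$ degree $2(d+3)$ group only up to the ambiguity indicated. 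A secondary technical point is verifying stability — that the colimit defining $\overline{\MTU}(d)$ has stabilised in this range of degrees — which follows from Lemma \ref{iso} and the connectivity of the cofibres $\MTU(d+k,k)$, whose bottom cell is in dimension roughly $2(d+k)$, so that for $k$ large the maps in the colimit system are isomorphisms on $\pi_q$ for $q \le 2(d+4)$.
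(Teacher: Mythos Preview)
Your overall plan---compute the $E_2$ page of the Adams--Novikov spectral sequence for $\overline{\MTU}(d)$ via the cobar complex on $BP_*(\overline{\MTU}(d))$, then import the $d_3$ from the sphere via the bottom-cell map---is the same route the paper takes. (The colimit/stability discussion and the $\MTU(d,1)$ splitting you mention are not used in the paper's argument, which works directly with the quotient comodule; they are harmless but unnecessary here.)

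However, your explanation of the parity split is wrong, and this is the heart of the computation. You attribute the vanishing of the torsion for odd $d$ to ``the left-unit/right-unit discrepancy $\eta_R(v_1)-\eta_L(v_1)=2t_1$ acting on the odd-degree generators.'' But that discrepancy contributes the coefficient $-2$ in $d_1(v_1B_1^{d+1})=-2\,t_1\otimes B_1^{d+1}$ \emph{regardless} of the parity of $d$, so it cannot distinguish the two cases. The genuine source of the parity dependence is the coaction on $B_1^{d+2}$: in the quotient comodule one has
\[
\psi(B_1^{d+2})=1\otimes B_1^{d+2}+(d+2)\,t_1\otimes B_1^{d+1},
\]
so $d_1(B_1^{d+2})=(d+2)\,t_1\otimes B_1^{d+1}$. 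The generators $B_2B_1^{d}$, $B_1^{d+2}$, $v_1B_1^{d+1}$ in degree $2d+4$ hit $t_1\otimes B_1^{d+1}$ with coefficients $2$, $d+2$, $-2$ respectively, and the cokernel is $\Z/\gcd(2,d+2)$---equal to $\Z/2$ precisely when $d$ is even. The paper then carries out the analogous (and longer) tabular cobar computations one and two degrees higher to pin down the remaining $E_2^{s,t}$ in the range; those explicit tables are the real content of the proof. If you tried to execute your sketch as written you would find that every coefficient produced by the $\eta_R(v_1)$ mechanism is even, and nothing in your argument would explain why anything depends on $d\bmod 2$.
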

	\begin{proof}
		 The coaction $\psi:BP_*(\MTU(d)) \ra BP_*(\MU)\otimes_{\pi_*(\mathbf{BP})} BP_*(\MTU(d))$ is the natural quotient of the coaction in $BP_*(\MU)$ induced by the map $\MU\ra \MTU(d)$. Note that the right action of $\pi_*(\mathbf{BP})$ on $BP_*(\MU)$ is given by multiplication by the image of the Hurewicz homomorphism $\eta_R$. We compute that $\eta_R(v_1)=v_1+2t_1$. (See \cite{ravenel}, appendix 2.2.)
		
		We start by computing the following coactions in $BP_*(\MU)$. 
		\begin{align*}
			\psi(B_1)&=1\otimes B_1+t_1\otimes 1\\
			\psi(B_2)&=1\otimes B_2 +(2t_1)\otimes B_1+t_1^2\otimes 1
		\end{align*} 
		We can then compute:.		
		\begin{align*}
			\psi(B_1^{d+1})&=1\otimes B_1^{d+1}\\
			\psi(B_2B_1^d)&=2t_1 \otimes B_1^{d+1}+1 \otimes B_2B_1^{d}\\
			\psi(B_1^{d+2})&=1\otimes B_1^{d+2}+(d+2)t_1\otimes B_1^{d+1}\\
			\psi(v_1B_1^{d+1})&=v_1\otimes B_1^{d+1}=1\otimes v_1B_1^{d+1}-2t_1\otimes B_1^{d+1}
		\end{align*} 
		
		We have two cases, if $d$ is even, then the cokernel of $d_1:E_1^{0,2d+4}\ra E_1^{1,2d+4}$ is $\Z/2$ generated by $t_1\otimes B_1^{d+1}$ and if $d$ is odd, the cokernel is zero.	Next we compute the cokernel of $d_1:E_1^{1,2d+6}\ra E_1^{2,2d+6}$.		
		\begin{align*}
			&d_1(t_1\otimes B_2B_1^d)=2 t_1\otimes t_1 \otimes \beta_1^{d+1} & d_1&(t_1 \otimes B_1^{d+2})= (d+2)t_1\otimes t_1 \otimes \beta_1^{d+1}\\ &d_1(t_1 \otimes v_1B_1^{d+1})= -2 t_1\otimes t_1 \otimes \beta_1^{d+1}& d_1&(t_1^2 \otimes B_1^{d+1})=-2 t_1\otimes t_1 \otimes \beta_1^{d+1}
		\end{align*}
		Once again, if $d$ is even, then the cokernel of $d_1:E_1^{1,2d+6}\ra E_1^{2,2d+6}$ is $\Z/2$, and if $d$ is odd, the cokernel is zero. For similar reasons, $d_1:E_1^{2,2d+8}\ra E_1^{3,2d+8}$ has cokernels $\Z/2$ or $0$ when $d$ is even or odd respectively.	We now compute the image of the map $d_1:E_1^{0,2d+6}\ra E_1^{1,2d+6}$. We will write this as a table giving of coefficients in the basis $ t_1\otimes B_2B_1^d, t_1 \otimes B_1^{d+2}, t_1 \otimes v_1B_1^{d+1}, t_1^2 \otimes B_1^{d+1} $		
		\[\begin{array}{c|c|c|c|c}
			B_3B_1^d & 3 & 0 & -2 & 5
			 \\
			B_2^2 B_1^{d-1}& 4 & 0 & 0 & 4 \\
			B_2 B_1^{d+1}& (d+1) & 2 & 0 & 2d+3 \\
			B_1^{d+3}& 0 & (d+3) & 0 & \frac{1}{2}(d+3)(d+2) \\
			v_1B_2B_1^d& -2 & 0 & 2 & -4 \\
			v_1B_1^{d+2}& 0 & -2 & (d+2) & 2(d+2) \\
			v_1^2B_1^{d+1}& 0 & 0 & -4 & 4 
		\end{array}\]
		The kernel of $d_1:E_1^{0,2d+6}\ra E_1^{1,2d+6}$ is generated by elements \[a_1t_1\otimes B_2B_1^d+a_2 t_1 \otimes B_1^{d+2}+a_3 t_1 \otimes v_1B_1^{d+1}+a_4 t_1^2 \otimes B_1^{d+1}\]
		where $2a_1+(d+2)a_2-2(a_3+a_4)=0$. Thus, we have two cases for $E_2^{1,2d+4}$: if $d$ is even, $E_2^{1,2d+4}=\Z/2$ and if $d$ is odd, $E_2^{1,2d+4}=0$. Similarly we can compute  $d_1:E_1^{1,2d+8}\ra E_1^{2,2d+8}$ in the basis: $ t_1 \otimes t_1\otimes B_2B_1^d,  t_1 \otimes t_1 \otimes B_1^{d+2},  t_1 \otimes t_1 \otimes v_1B_1^{d+1},\\ t_1 \otimes t_1^2 \otimes B_1^{d+1}, t_1^2 \otimes t_1 \otimes B_1^{d+1}$. 
		
		\[\begin{array}{c|c|c|c|c|c}
			t_1 \otimes B_3B_1^d & 3 & 0 & -2 & 5& 0\\
			t_1 \otimes B_2^2 B_1^{d-1}& 4 & 0 & 0 & 4 & 0\\
			t_1 \otimes B_2 B_1^{d+1}& (d+1) & 2 & 0 & 2d+3 & 0\\
			t_1 \otimes B_1^{d+3}& 0 & (d+3) & 0 & \frac{1}{2}(d+3)(d+2) &0 \\
			t_1 \otimes v_1B_2B_1^d& -2 & 0 & 2 & -4 &0 \\
			t_1 \otimes v_1B_1^{d+2}& 0 & -2 & (d+2) & 2(d+2) & 0\\
			t_1 \otimes v_1^2B_1^{d+1}& 0 & 0 & -4 & 4 &0 \\
			t_1^2 \otimes B_2B_1^d & -2& 0&0 &0 & 2 \\
			t_1^2 \otimes B_1^{d+2} &0 &-2 & 0& 0& (d+2)\\
			t_1^2 \otimes v_1B_1^{d+1} & 0& 0& -2&0 & -2 \\
			t_2 \otimes B_1^{d+1} & 0& 0& -1 & 3 & 2 \\
			t_1^3 \otimes B_1^{d+1} &0 &0 & 0& 3 & 3
		\end{array}\]
We find that $E_2^{2,2d+6}$ is always zero, and the Adams chart for $d$ even is:
		\begin{center}
			\begin{tikzpicture}
			\node (v1) at (-0.12,0) {};
			\node (v0) at (0,-0.12) {};
			\node (v2) at (6.5,0) {};
			\draw[->]  (v1) edge (v2);
			\node (v3) at (0,4) {};
			\draw[->] (v0) edge (v3);
			\node at (-0.5,0.5) {0};
			\node at (-0.5,1.25) {1};
			\node at (-0.5,2) {2};
			
			\node at (0.5,-0.5) {$2d+2$};
			\node at (2.5,-0.5) {$2d+4$};
			\node at (4.5,-0.5) {$2d+6$};
			\node at (1.5,1.25) {$\Z/2$};
			\node at (4.5,2) {$0$};
			\node at (2.5,2) {$\Z/2$};
			\node at (-0.5,2.75) {3};
			\node at (3.5,2.75) {$\Z/2$};
			\node at (3.5,1.25) {$\Z/2$};
			\node at (0.5,0.5) {$\Z$};
			\node at (2.5,0.5) {$\Z^{\oplus 2}$};
			\node at (4.5,0.5) {$\Z^{\oplus 4}$};
			\node at (-0.5,3.5) {$4$};
			\node (v7) at (4.5,3.5) {$\Z/2$};
			\node (v6) at (5.5,1.25) {};
			\draw[->]  (v6) edge (v7);
			\node at (-0.5,4) {$s$};
			\node at (6,-0.5) {$t-s$};
		\end{tikzpicture}
		
		\end{center}
		The non-trivial $d_3$ differential can be observed by looking at the map of spectra $S^{\infty+2d}\ra \overline{\MTU}(d)$ given by the generator of $\pi_{2d}(\overline{\MTU}(d))$. There may be an additional differential killing $E_2^{2,2d+8}$ but since this element is in odd degree, we will not need to know it. The Adams-Novikov spectral sequence for $d$ odd is trivial in this range.
	\end{proof}
		\begin{remark}
			The vanishing of the torsion obstruction to $r$ complex sections does not imply the vanishing of the torsion obstruction to $r-1$ complex sections, thus each result must be proved separately. 
		\end{remark}
		We use these Adams charts and homotopy groups to prove:
\begin{theorem}\label{3vf2}
	There is no torsion obstruction to the existence of 2 or 3 linearly independent complex sections on $M^{2d}$ for $d\geq 6$.
\end{theorem}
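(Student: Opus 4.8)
The plan is to show that for $r = 2$ and $r = 3$ every class in $\pi_*(\MU)$ satisfying the rational obstruction (Theorem~\ref{ratobs1}) actually lifts to $\pi_{2d}(\MTU(d-r))$, by using Proposition~\ref{diff}: it suffices to show that the relevant element $x \in E_2^{0,2d}(\MTU(d-r))$ supports no Adams-Novikov differential. By Proposition~\ref{shift}, a differential $d_k : E_k^{0,2d}(\MTU(d-r)) \to E_k^{k,2d+k-1}(\MTU(d-r))$ lands, after reindexing, in $E_k^{k-1,2d+k-1}(\overline{\MTU}(d-r-1))$, so the whole question is controlled by the groups $E_*^{s,t}(\overline{\MTU}(e))$ with $e = d - r - 1$ computed in Theorem~\ref{homgroups}. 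The strategy is to unwind exactly which entries of the Adams chart for $\overline{\MTU}(e)$ can be hit by a differential out of the $0$-line in the degree $t = 2d$, and to check that in the range relevant to $r = 2, 3$ those entries either vanish or are already accounted for by a known differential (the $d_3$ coming from the geometric map $S^{\infty + 2e} \to \overline{\MTU}(e)$ identified in the proof of Theorem~\ref{homgroups}).

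I would proceed case by case. First, for $r = 2$: here $e = d - 3$, and the class $[M]$ of interest sits in degree $2d = 2(e+3)$. Consulting Theorem~\ref{homgroups}, the potential targets of differentials out of $E_k^{0, 2d}(\MTU(d-2))$ correspond to the entries of the $\overline{\MTU}(e)$ chart in total degree $2d - 1 = 2(e+3) - 1$, i.e.\ in the odd column $2(e+2)+1$; since $\overline{\MTU}(e)$ has homology concentrated in even degrees (Proposition~\ref{homMTU} and its corollaries) and the chart in Theorem~\ref{homgroups} shows the relevant $E_2^{s,t}$ are supported on even $t$, I expect these targets to vanish outright, except possibly for the one question-marked group, which I would argue lies in the wrong parity/degree to receive a differential out of our $x$. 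Then I invoke Proposition~\ref{diff} to conclude the $2$-torsion obstruction for $r = 2$ vanishes, which combined with Theorem~\ref{ratobs} and the $p>2$ case (Theorem~\ref{torsobs}, applicable since $r = 2 < p^2 - p$ for all $p > 2$) gives the full integral statement. For $r = 3$, the same analysis applies with $e = d - 4$, and $[M]$ in degree $2d = 2(e+4)$; now the potential differentials can reach one step further into the chart of Theorem~\ref{homgroups} (the columns $2(e+3)$ through $2(e+4)$), and I must check each surviving torsion group — in particular the $\Z/2$'s appearing for $d$ even — is either in odd total degree relative to our class, hence cannot be a target, or is killed by the already-identified $d_3$ differential rather than by a differential out of the $0$-line. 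The constraint $d \geq 6$ is exactly what is needed so that $e \geq 3$ (resp. $e \geq 2$) and Theorem~\ref{homgroups} applies; I would note that the odd-$d$ case is immediate since the Adams-Novikov spectral sequence for $\overline{\MTU}(e)$ is trivial in the relevant range, so only $d$ even requires the differential bookkeeping.

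The main obstacle will be the careful parity and degree bookkeeping in the $r = 3$, $d$ even case: I must make sure that no nonzero torsion group in the chart of Theorem~\ref{homgroups} — especially the $\Z/2$ in filtration $1$ and the $\Z/2$ in filtration $3$ appearing in total degrees near $2(e+3)$ and $2(e+4)$ — can be the target of a differential supported on the class lifting $[M]$, as opposed to being hit (or not) by differentials from other $0$-line classes. Concretely I need to know that the $d_3$ from $E_3^{0,2e}(\overline{\MTU}(e))$ already accounts for the $\Z/2$ in filtration $3$, so that by the time we reach $E_\infty$ there is nothing left in a position to obstruct our lift; the entry flagged with a question mark in Theorem~\ref{homgroups} sits in an odd column and thus is irrelevant as a differential target for an even-degree class, which I will point out explicitly. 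Once this bookkeeping is pinned down, the conclusion is formal from Proposition~\ref{diff}.
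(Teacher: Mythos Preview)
Your outline has the right shape but contains a genuine indexing error and, more importantly, misses the key input the paper actually uses.

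First, the indexing. Proposition~\ref{shift} says $E_*^{s,t}(\overline{\MTU}(d)) \cong E_*^{s+1,t}(\MTU(d))$ for the \emph{same} $d$; there is no shift in the subscript. So for $r$ sections the relevant chart is that of $\overline{\MTU}(d-r)$, not $\overline{\MTU}(d-r-1)$: your $e$ should be $d-2$ and $d-3$, not $d-3$ and $d-4$. Moreover, if a differential $d_k$ lands in $E_k^{k,2d+k-1}(\MTU(d-r)) \cong E_k^{k-1,2d+k-1}(\overline{\MTU}(d-r))$, then in the $\overline{\MTU}(d-r)$ chart the target sits in the \emph{even} column $t-s = 2d$, not the odd column $2d-1$. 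Your two off-by-one errors do not cancel, and the parity heuristic you propose (targets vanish because they lie in odd columns) is simply false once the indices are corrected.

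With correct indexing the cases work out as follows. For $r=3$ (so $e = d-3$), the column $t-s = 2e+6$ in the $\overline{\MTU}(e)$ chart contributes only $\Z^{4}$ in filtration $0$ and $0$ in filtration $2$; the $\Z/2$ in filtration $4$ is already killed by the $d_3$ in that spectral sequence. Hence $\pi_{2d}(\overline{\MTU}(d-3))$ is free abelian and there is no torsion target at all. This is the paper's one-line argument and requires no further bookkeeping. For $r=2$, $d$ odd (so $e=d-2$ odd), similarly $\pi_{2d}(\overline{\MTU}(d-2)) = \Z^2$ is free. The genuine issue is $r=2$, $d$ even: now the column $t-s = 2e+4$ has a $\Z/2$ in filtration $2$, giving a potential $d_3$ target $E_3^{3,2d+2}(\MTU(d-2))$ that your parity argument cannot exclude. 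The paper resolves this using an ingredient you do not invoke: the Kashiwabara--Zare splitting $S^{\infty+2e} \vee X \simeq \MTU(e)$ for $e$ even, under which this $\Z/2$ is identified with $h_1^3$ in the sphere summand. Since $h_1^3$ is a permanent cycle in the Adams--Novikov spectral sequence for the sphere and the splitting is compatible with differentials, it cannot be hit by any $d_k$ from the $0$-line of $\MTU(d-2)$. Without this splitting (or an equivalent external input) I do not see how your bookkeeping can rule out the $\Z/2$.
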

\begin{proof}
	Since $\pi_{2d+6}(\overline{\MTU}(d))$ is free abelian, there is no possible target for a torsion obstruction to $3$ complex sections. Similarly, there is no possible target for a torsion obstruction to $2$ complex sections when $d$ is odd. In the case $d$ is even, there could be an obstruction to $2$ complex sections. However, we recall from \cite{KashiwabaraZare} that $S^{\infty+2d}$ splits off $\MTU(d)$ for $d$ even. The element in $E_2^{3,2d+3}$ comes from this split $S^{\infty+2d}$ (specifically, it can be identified with $h_1^3$), and so cannot be the target of a differential.
\end{proof}

\begin{theorem}\label{4vf2}
	There is no torsion obstruction to $4$ complex sections when $d>6$ is odd.
\end{theorem}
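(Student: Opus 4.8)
The plan is to reduce to the prime $2$ and then observe that the obstruction group in play is torsion-free. For every odd prime $p$ one has $4<p^2-p$, so Theorem~\ref{torsobs} already gives vanishing of the $p$-torsion obstruction to $4$ complex sections; hence only $p=2$ needs attention, consistently with working $2$-locally. Set $\delta=d-4$. Since $d>6$ is odd, $\delta\geq 3$ is odd, so Theorem~\ref{homgroups} applies to $\overline{\MTU}(\delta)$ and gives $\pi_{2(\delta+4)}(\overline{\MTU}(\delta))\cong\Z^{4}$, a torsion-free group.

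Now take $[M^{2d}]$ with vanishing rational obstruction to $4$ complex sections (Theorem~\ref{ratobs1}: $s_\omega([M])=0$ for all $\omega$ of length $\geq d-3$). From the cofiber sequence $\MTU(\delta)\to\MU\to\overline{\MTU}(\delta)$, the obstruction to $[M]$ lifting to $\pi_{2d}(\MTU(\delta))$ — equivalently, to $[M]$ containing a manifold with $4$ complex sections — is the image of $[M]$ under $\pi_{2d}(\MU)\to\pi_{2d}(\overline{\MTU}(\delta))$. By Theorem~\ref{ratobs1} the hypothesis on the $s_\omega$ says exactly that this image vanishes rationally, hence is a torsion class; but $\pi_{2d}(\overline{\MTU}(\delta))=\pi_{2(\delta+4)}(\overline{\MTU}(\delta))\cong\Z^{4}$ has no torsion, so the image is $0$ and $[M]$ lifts. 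With the odd-primary case, this gives the theorem. One can also phrase the last step through Proposition~\ref{diff}: $[M]$ lifts to some $x\in E_2^{0,2d}(\MTU(\delta))$, and its potential differential targets $E_k^{k,2d+k-1}(\MTU(\delta))$ $(k\geq 2)$ are, by Proposition~\ref{shift}, subquotients of $E_2^{k-1,2d+k-1}(\overline{\MTU}(\delta))$, which sits in topological degree $2d=2(\delta+4)$ and positive Adams--Novikov filtration; since $\delta$ is odd the proof of Theorem~\ref{homgroups} records that the spectral sequence of $\overline{\MTU}(\delta)$ is trivial in positive filtration throughout this range, so these targets vanish and no differential can originate from $x$.

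The step I expect to be the real constraint is that everything happens in topological degree $2(\delta+4)$, the very top of the range recorded in Theorem~\ref{homgroups}; one must make sure the cobar-complex computations there actually reach that degree, and — for the formulation via Proposition~\ref{diff} — that they control the whole $E_2$-page there, not only the homotopy group. This is exactly where oddness of $\delta=d-4$ is used: for even parameter the Adams--Novikov $E_2$-page carries genuine $2$-torsion in positive filtration in nearby degrees, notably the $h_1^{3}$ class supported by the split summand $S^{\infty+2\delta}$ that already intervenes in the even case of Theorem~\ref{3vf2}, so for even $d$ a bare $E_2$-vanishing statement would not be enough and one would have to fall back on that splitting.
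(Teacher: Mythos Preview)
Your argument is correct and in fact more direct than the paper's. You observe that for $d$ odd the entry $\pi_{2(\delta+4)}(\overline{\MTU}(\delta))\cong\Z^4$ from Theorem~\ref{homgroups} (with $\delta=d-4$ odd) is torsion-free, so the obstruction class, being torsion by the rational hypothesis, must vanish outright; combined with Theorem~\ref{torsobs} at odd primes this finishes the proof.

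The paper proceeds differently: rather than invoking the $q=2(\delta+4)$ column of Theorem~\ref{homgroups} directly, it first uses Theorem~\ref{3vf2} to push the obstruction to zero in $\pi_{2d}(\overline{\MTU}(d-3))$, then lifts it along the cofibration $\MTU(d-3,1)\to\overline{\MTU}(d-4)\to\overline{\MTU}(d-3)$ into $\pi_{2d}(\MTU(d-3,1))\cong\pi_6^s(S^0)\oplus\pi_6^s(BU(d-3))$, and finally kills the only torsion class $h_2^2$ via the $\pi_*^s(S^0)$-module structure, using that $h_2$ already dies when $d$ is odd. Your route is shorter and uses nothing beyond the table in Theorem~\ref{homgroups}; the paper's route is independent of the top-degree entry of that table and instead isolates exactly \emph{which} stable homotopy class carries the potential obstruction. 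That pays off in the even case: the same framework shows that for $d$ even the candidate obstruction is precisely $h_2^2$, which does not obviously die, explaining the Remark following the theorem. Your approach would simply report that the target group may have $2$-torsion when $\delta$ is even, without identifying it.

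Your caveat about whether the cobar computations in the proof of Theorem~\ref{homgroups} genuinely reach degree $2(\delta+4)$ is well taken; the paper's own proof of Theorem~\ref{4vf2} sidesteps that issue by not relying on that entry.
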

\begin{proof}
	Let $M\in \pi_{2d}(\MU)$ with vanishing rational obstruction. Recall that the image of $M$ in the group $\pi_{2d}(\overline{\MTU}(d-4))$ is the obstruction to $4$ complex sections. Thus the image is torsion. By Theorem \ref{3vf2}, there is no torsion obstruction to $3$ complex sections so the image of $M$ in $\pi_{2d}(\overline{\MTU}(d-3))$ is zero. Thus the obstruction to $4$ complex sections lifts to $\pi_{2d}(\MTU(d-3,1))\cong \pi_{2d}^s(S^{2d-6}) \oplus \pi_6^s(BU(d-3))$. 
	\begin{center}
		\begin{tikzpicture}
		\node (v1) at (-1.5,0) {$\pi_{2d}(\MTU(d-4))$};
		\node (v2) at (2,0) {$\pi_{2d}(\MU)$};
		\node (v3) at (5.5,0) {$\pi_{2d}(\overline{\MTU}(d-4))$};
		\node (v6) at (-1.5,-1) {$\pi_{2d}(\MTU(d-3))$};
		\node (v7) at (2,-1) {$\pi_{2d}(\MU)$};
		\node (v4) at (5.5,-1) {$\pi_{2d}(\overline{\MTU}(d-3))$};
		\node (v5) at (5.5,1) {$\pi_{2d}(\MTU(d-3,1))$};
		\draw [->] (v1) edge (v2);
		\draw [->] (v2) edge (v3);
		\draw [->] (v3) edge (v4);
		\draw [->] (v5) edge (v3);
		\draw [->] (v1) edge (v6);
		\draw [->] (v6) edge (v7);
		\draw [->] (v7) edge (v4);
		\draw [double] (v7) edge (v2);
	\end{tikzpicture}
	\end{center}
	
	The group $\pi_6^s(BU(d-3))$ is torsion free. The torsion element in $\pi_6^s(S^0)$ is $h_2\cdot h_2$. The map $\pi_{2d}^s(S^{2d-6})\ra \pi_{2d}(\MTU(d-3))$ is a $\pi_*^s(S^0)$ module map. In particular, it must take $h_2$ to $0$ if $d$ is odd and we conclude that $h_2\cdot h_2\mapsto h_2\cdot 0=0$. Thus the torsion obstruction to $M$ having $4$ complex sections vanishes.
\end{proof}

\begin{remark}
	For $d$ even, there is a potential obstruction of order $2$. We expect that this obstruction is non vanishing and, based on similar results in \cite{atiyahdupont} and \cite{bokstedt}, we expect that the obstruction is a divisibility condition on the signature of the manifold.
\end{remark}

We can give a bound on the possible torsion obstruction to the existence of $r$ complex sections. Using the following theorem of Segal \cite{Segal}.

\begin{theorem}
	There are no non-zero differentials originating in the bottom row of the Adams-Novikov spectral sequence for $\MTU(1)\cong \Sigma^{\infty-2} \C P^\infty$.
\end{theorem}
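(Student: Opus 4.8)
The bottom row is $E_2^{0,*}(\MTU(1))=\Hom_{BP_*(\mathbf{BP})}(\pi_*(\mathbf{BP}),BP_*(\MTU(1)))$, the $\pi_*(\mathbf{BP})$-module of comodule primitives of $BP_*(\MTU(1))$. Since every $d_r$ raises the homological degree, nothing maps into this row, so the claim is equivalent to $E_2^{0,*}(\MTU(1))=E_\infty^{0,*}(\MTU(1))$, i.e. every primitive is a permanent cycle and hence the $BP$-Hurewicz image of an element of $\pi_*(\MTU(1))$. Two observations frame the problem. First, $BP_*(\MTU(1))$ is a free $\pi_*(\mathbf{BP})$-module --- by the corollary to Proposition \ref{homMTU} it is the $\pi_*(\mathbf{BP})$-span of $1,B_1,B_2,\dots$ in $BP_*(\MU)$ --- so the bottom row is torsion-free; and since $E_2^{s,t}$ is finitely generated over $\Z_{(p)}$ and the rows $s\geq1$ are rationally trivial, any differential out of the bottom row has finite image. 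The real content is thus to rule out even this finite loss: to show every primitive lifts \emph{exactly}, not merely up to finite index. Second, rationally the spectral sequence collapses, so the bottom row and $\pi_*(\MTU(1))$ have equal rank in each degree; using $\MTU(1)\simeq\Sigma^{\infty-2}\C P^\infty$, the task becomes to realize every $BP$-comodule primitive by a genuine homotopy class of $\Sigma^{\infty-2}\C P^\infty$.

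First I would make the bottom row explicit. The coaction on $BP_*(\MTU(1))$ is the restriction of that on $BP_*(\MU)$ --- on generators $\psi(B_1)=1\otimes B_1+t_1\otimes1$, $\psi(B_2)=1\otimes B_2+2t_1\otimes B_1+t_1^2\otimes1$, $\psi(B_3)=1\otimes B_3+3t_1\otimes B_2+(t_1^2-2v_1t_1)\otimes B_1+t_2\otimes1$, and so on (and their prime-$p$ analogues). Writing a primitive as $z=a_0+\sum_{i\geq1}a_iB_i$ and equating $\psi(z)$ with $1\otimes z$ gives a triangular system: read from the top, the leading coefficient must be a primitive of $\pi_*(\mathbf{BP})$, hence lies in $\pi_0(\mathbf{BP})=\Z_{(p)}$, and each lower $a_i$ is then forced to solve $(\eta_R-\eta_L)(a_i)=(\text{expression in the }a_j,\ j>i)$ in $BP_*(\mathbf{BP})$. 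As $\pi_*(\mathbf{BP})$ is torsion-free, solvability forces $p$-divisibility conditions on the $a_i$, read off from the $p$-adic valuations of the binomial coefficients and of $\eta_R-\eta_L$. Solving the system exhibits the bottom row as a concrete lattice in $\pi_*(\mathbf{BP})\{1,B_1,B_2,\dots\}$; under the forgetful map to $E_2^{0,*}(\MU)=\pi_*(\MU)_{(p)}$ it is the lattice of ``primitive'' cobordism classes --- by Theorem \ref{ratobs1} with $d-r=1$ these are the classes whose Chern numbers $s_\omega$ of length $\geq2$ all vanish, cut down by the divisibilities just computed.

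Finally I would realize these classes. The skeletal inclusions give maps $\Sigma^{\infty-2}\C P^n\to\MTU(1)$ from finite stunted complex projective spectra, and a primitive supported in degrees $\leq2(n-1)$ is realized once the corresponding class of $\pi_*(\Sigma^{\infty-2}\C P^n)$ threads up the skeletal tower; the only obstruction is the ($v_1$-periodic) attaching-map data of the top cells. To supply exactly those periodic classes I would bring in the transfer for the circle bundle $S^\infty\to\C P^\infty$ together with the complex orientation $\MTU(1)\to\MU$: the transfer contributes classes in $\pi_*(\MTU(1))$ whose order in each degree is governed by Atiyah--Todd/Bernoulli-type denominators --- precisely those appearing in the lattice above. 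Showing that these cover the whole bottom row, with no deficit, is the substance of Segal's computation of $\pi^s_*(\C P^\infty)$ in \cite{Segal}, and is the step I would not reprove; identifying the bottom row by a cobar calculation and threading classes up the skeletal filtration are routine by comparison. The exact realization of the primitives --- equivalently, that the stable homotopy of $\C P^\infty$ is no smaller than its $BP$-primitive $0$-line forces --- is therefore the main obstacle.
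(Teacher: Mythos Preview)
The paper does not prove this statement at all: it is quoted as a theorem of Segal and simply cited to \cite{Segal}, with no argument supplied. Your proposal is therefore not competing against any proof in the paper. What you have written is a correct and informative unpacking of what the theorem asserts --- identifying the bottom row with $BP$-comodule primitives, solving the triangular system coming from $\psi$ to describe that lattice explicitly, and recognizing that realizing those primitives by honest homotopy classes is exactly the content of Segal's computation of $\pi_*^s(\C P^\infty)$ --- and you are right to flag that last step as the one you would not reprove. In short, your proposal and the paper agree that the substance lies in \cite{Segal}; you have additionally spelled out the reduction that the paper leaves implicit.
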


\begin{corollary}
	There is no torsion obstruction to the existence of $d-1$ sections on $[M^{2d}]$.
\end{corollary}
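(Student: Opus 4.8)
The plan is to reduce the statement directly to Segal's theorem by specializing Proposition~\ref{diff} to the case $r = d-1$. Under the indexing conventions of Section~2, equipping $[M^{2d}]$ with $d-1$ complex sections amounts to lifting its cobordism class from $\pi_{2d}(\MU)$ to $\pi_{2d}(\MTU(d-r))$ with $d-r=1$; and the quoted equivalence $\MTU(1)\cong\Sigma^{\infty-2}\C P^\infty$ identifies $\MTU(1)$ with precisely the spectrum appearing in Segal's theorem. Since $\Sigma^{\infty-2}\C P^\infty$ has torsion-free homology concentrated in even degrees, the structural results preceding Proposition~\ref{diff}, and hence Proposition~\ref{diff} itself, apply to it after replacing $d$ by $1$.

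First I would fix a prime $p$ and take $[M]\in\pi_{2d}(\MU)$ for which $p[M]$ lifts to $\pi_{2d}(\MTU(1))$; this is exactly the configuration in which a $p$-torsion obstruction to $d-1$ sections could arise. By Proposition~\ref{diff} (with $d$ replaced by $1$), $[M]$ then lifts to an element $x\in E_2^{0,2d}(\MTU(1))$ in the bottom row of the $p$-primary Adams--Novikov spectral sequence, and $[M]$ itself lifts to $\pi_{2d}(\MTU(1))$ if and only if no Adams--Novikov differential originates from $x$. Since $x$ has resolution $s=0$, Segal's theorem---which says that no non-zero differential originates in the bottom row of the Adams--Novikov spectral sequence for $\MTU(1)$---forces $d_k(x)=0$ for every $k$. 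Hence $[M]$ lifts, so the constant $c$ of Theorem~\ref{ratobs} may be chosen coprime to $p$. Since $p$ was an arbitrary prime, the $p$-torsion obstruction to $d-1$ sections vanishes for every $p$, which is the claim.

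The computation here is essentially empty; the only point requiring care is the bookkeeping that (i) the class $x$ produced by Proposition~\ref{diff} really lies in filtration zero, so that Segal's statement about the bottom row applies to it, and (ii) the spectrum in Segal's theorem is the $\MTU(1)$ that governs $d-1$ sections rather than $\overline{\MTU}(1)$ or a suspension of it. I expect this identification of spectra and filtration degrees to be the only obstacle; once it is in place, Segal's theorem closes the argument immediately.
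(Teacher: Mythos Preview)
Your proposal is correct and is precisely the argument the paper intends: the corollary is stated immediately after Segal's theorem with no separate proof, and the intended deduction is exactly the one you spell out---apply Proposition~\ref{diff} with $d$ replaced by $1$, note that the resulting class $x$ sits in filtration $s=0$, and invoke Segal's theorem to kill all differentials out of $x$. The only minor point is that the passage from ``$p[M]$ lifts $\Rightarrow$ $[M]$ lifts'' to ``$c$ may be chosen coprime to $p$'' tacitly uses an easy induction on the $p$-adic valuation of $c$, but this is routine and already implicit in the paper's identification of torsion obstructions with nontrivial differentials.
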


In particular, we can find an element of $N^{2d}\in\pi_{2d}(\MTU(1))$ with all characteristic classes zero except $s_{d}(N^{2d})=(d+1)!/B_{2d}$ where $B_{2d}$ is the Bernoulli number \cite{stongcong}. From Milnor's characterization of the generators of the complex cobordism ring, the following proposition immediately follows. These numbers are known to play a role in the theory of characteristic classes cf. \cite{Adamschern}. \begin{proposition}\label{goodgen}
	For every $d$, define \[a_{d}=\left\lbrace\begin{array}{cc}
		(d+1)!/B_{2d} & d\neq p^i-1 \\
		(d+1)!/(pB_{2d})& d=p^i-1 \\
	\end{array} \right. \]
	Then for any set of multiplicative generators $M^{2d}\in \Omega_{2d}^U$, we can write the cobordism class as $N^{2d}= \tilde{a}_{d} M^{2d} + \tn{decomposables} $
\end{proposition}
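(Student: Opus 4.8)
The plan is to deduce this directly from Milnor's classification of the polynomial generators of $\Omega_*^U$, combined with the value of the characteristic number $s_d(N^{2d})$ recorded just above.

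First I would recall the relevant part of Milnor's theorem. Since $\Omega_*^U$ is a polynomial ring over $\Z$, in degree $2d$ the indecomposable quotient $\Omega^U_{2d}/(\text{decomposables})$ is infinite cyclic, and a class $M^{2d}$ belongs to a polynomial basis precisely when it generates this quotient; in particular $\Omega^U_{2d}=\Z\cdot[M^{2d}]\oplus(\text{decomposables})$ for any such generator. The Chern number $s_d$ associated to the one-part partition $(d)$ is additive under Whitney sums, hence vanishes on decomposable bordism classes: if $A$, $B$ have complex dimensions $a,b\geq 1$ with $a+b=d$, then $s_d(T(A\times B))=p_A^*s_d(TA)+p_B^*s_d(TB)=0$ because $s_d(TA)\in H^{2d}(A)=0$ and $s_d(TB)\in H^{2d}(B)=0$ (as $2a,2b<2d$). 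Therefore $s_d$ factors through the rank-one indecomposable quotient in degree $2d$, and Milnor's computation identifies its image as $s_d(\Omega^U_{2d})=\mu(d)\,\Z$, where $\mu(d)=p$ when $d+1=p^i$ is a power of a prime $p$ and $\mu(d)=1$ otherwise; equivalently $s_d(M^{2d})=\pm\mu(d)$ for a generator.

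Now I would combine the inputs. By the discussion preceding the proposition (the corollary to Segal's theorem together with \cite{stongcong}), $N^{2d}\in\Omega^U_{2d}$ has $s_d(N^{2d})=(d+1)!/B_{2d}$; this is an integer, being a Chern number of a bordism class, and by the previous paragraph it is divisible by $\mu(d)$. Hence $a_d=(d+1)!/(\mu(d)\,B_{2d})$ is an integer, and the two branches in its definition are exactly the two branches defining $\mu(d)$, so $a_d=|s_d(N^{2d})|/\mu(d)$. Writing $N^{2d}=k\,M^{2d}+D$ with $D$ decomposable and $k\in\Z$, and applying $s_d$ (which kills $D$), we get $s_d(N^{2d})=k\cdot s_d(M^{2d})=\pm k\,\mu(d)$, so $k=\pm a_d$. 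Taking $\tilde a_d:=k$, equal to $\pm a_d$ with the sign recording the orientation of $M^{2d}$, gives $N^{2d}=\tilde a_d\,M^{2d}+\text{decomposables}$, as claimed.

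I do not anticipate a genuine obstacle: all the substance lies in the two cited facts — Milnor's value of $\mu(d)$ and the identification $s_d(N^{2d})=(d+1)!/B_{2d}$ from Segal's theorem and \cite{stongcong} — and the remainder is the formal splitting of $\Omega^U_{2d}$ as a generator line plus decomposables. The only steps needing care are checking that $s_d$ vanishes on decomposables (the Whitney-sum computation above) and matching the prime-power case split in the definition of $a_d$ with the one governing $\mu(d)$, so that $\tilde a_d$ is the integer $s_d(N^{2d})/s_d(M^{2d})$ in both regimes.
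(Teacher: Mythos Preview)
Your proposal is correct and follows exactly the approach the paper indicates: the paper simply asserts that the proposition ``immediately follows'' from Milnor's characterization of the generators of $\Omega_*^U$ together with the value $s_d(N^{2d})=(d+1)!/B_{2d}$, and you have spelled out precisely that deduction. Your identification of $\tilde a_d$ with $\pm a_d$ (absorbing the sign ambiguity from $s_d(M^{2d})=\pm\mu(d)$) is the natural reading of the paper's undefined $\tilde a_d$.
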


	For any partition $I=\{i_1,...,i_n\}$, define $a_I=\tilde{a}_{i_1}...\tilde{a}_{i_k}$. Let $M^{2i}\in \Omega_{2i}^U$ be any collection of multiplicative generators and write $M^I= M^{2i_1}\times ... \times M^{2i_k}$. Define a partial order on partitions of $d$ by \[\{i_1,...,i_k\}> \{j_{1,1},...,j_{1,l_1},j_{2,1},...,j_{2,l_1},.... ,j_{k,1},...,j_{k,l_k}\}\]
	if $i_n=\sum_{m=1}^{l_m} j_{n,m}$. 
\begin{theorem}
	Let $K^{2d}=M^I + \sum_{J<I} b_JM^J $ be a manifold with vanishing rational obstruction to $r$ complex sections. Then $\tn{lcm}(a_J\mid J<I)[K^{2d}]$ contains a manifold which can be equipped with $r$ complex sections.
\end{theorem}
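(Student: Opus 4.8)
The plan is to pass to a multiplicative $\Q$-basis of $\Omega^U_{2d}\otimes\Q$ assembled from Segal's manifolds, in which the rational obstruction and the target condition ``$[K^{2d}]$ lies in the image of $\pi_{2d}(\MTU(d-r))\to\pi_{2d}(\MU)$'' both become transparent. First I would record that the hypothesis forces $\ell(I)\le d-r$: since $s_\omega(M^J)=0$ whenever $\ell(\omega)<\ell(J)$, the only summand of $K^{2d}$ contributing to $s_I$ is $M^I$, so $s_I(K^{2d})=s_I(M^I)=\pm\prod_l s_{i_l}(M^{2i_l})\neq 0$, and by Theorem \ref{ratobs1} a nonzero $s_\omega(K^{2d})$ with $\ell(\omega)\ge d-r+1$ would obstruct.

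Next, for each partition $J=\{j_1,\dots,j_m\}\vdash d$ I would set $\hat M^J:=N^{2j_1}\times\cdots\times N^{2j_m}$, where $N^{2a}$ is the manifold of Proposition \ref{goodgen} with $a-1$ complex sections and all $s$-numbers zero except $s_{(a)}(N^{2a})\neq 0$. Three properties drive the argument: (i) the product manifold $\hat M^J$ carries $\sum_l(j_l-1)=d-\ell(J)$ complex sections, so $[\hat M^J]$ lies in the image of $\pi_{2d}(\MTU(\ell(J)))\to\pi_{2d}(\MU)$, and hence in that of $\pi_{2d}(\MTU(d-r))\to\pi_{2d}(\MU)$ whenever $\ell(J)\le d-r$; (ii) by Proposition \ref{goodgen}, $N^{2a}=\tilde a_a M^{2a}+(\text{decomposables})$, so $\hat M^J=a_J M^J+\sum_{J'<J}A_{J,J'}M^{J'}$ with $A_{J,J'}\in\Z$, whence $\{[\hat M^J]\}$ is a $\Q$-basis of $\Omega^U_{2d}\otimes\Q$, triangular for the order $<$ with diagonal entries $a_J$; (iii) by multiplicativity of the characteristic classes $s_\omega$ together with the concentration of the $s$-numbers of each $N^{2a}$ in $s_{(a)}$, one has $s_\omega(\hat M^J)=0$ for $\omega\neq J$ and $s_J(\hat M^J)\neq 0$.

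I would then expand $[K^{2d}]=\sum_{J\vdash d}\tilde b_J[\hat M^J]$ over $\Q$; by the triangularity (ii) and the shape $K^{2d}=M^I+\sum_{J<I}b_J M^J$ only partitions $J\le I$ occur. For any $J$ with $\ell(J)\ge d-r+1$, Theorem \ref{ratobs1} gives $s_J(K^{2d})=0$ whereas by (iii) $s_J(K^{2d})=\tilde b_J\,s_J(\hat M^J)$ with $s_J(\hat M^J)\neq 0$; hence $\tilde b_J=0$, and
$$[K^{2d}]=\sum_{J\le I,\ \ell(J)\le d-r}\tilde b_J\,[\hat M^J].$$
Each $[\hat M^J]$ appearing here lies, by (i), in the image of $\pi_{2d}(\MTU(d-r))\to\pi_{2d}(\MU)$, which is a subgroup of $\pi_{2d}(\MU)$; so the whole problem reduces to showing that $\tn{lcm}(a_J\mid J<I)$ clears the denominators of the coefficients $\tilde b_J$.

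To control those denominators I would invert the triangular matrix from (ii) ``by increasing length'': the minimal-length term of $K^{2d}$ is $M^I$ with coefficient $1$, and subtracting $\frac{1}{a_I}[\hat M^I]$ produces a class $[K_1]$ supported strictly below $I$, of strictly larger minimal length, and still with vanishing rational obstruction to $r$ sections (the subtracted class has $\ge r$ sections). Repeating, one peels off at each stage the finitely many pairwise $<$-incomparable $[\hat M^J]$ of the current minimal length; the process terminates and, by the displayed formula, terminates at $0$ before the minimal length exceeds $d-r$, each $J$ being used exactly once and contributing only the denominator $a_J$. The hard part will be to push the resulting common denominator down to the asserted $\tn{lcm}(a_J\mid J<I)$: the first peeling step divides by $a_I$ (with coefficient $\pm 1/a_I$), which need not divide $\tn{lcm}(a_J\mid J<I)$, so the top length $\ell(I)$ must be handled separately — using integrality of $[K^{2d}]$ together with Segal's theorem (which furnishes the $N^{2a}$ and kills torsion obstructions in the relevant range) to lift the ``leading part'' of $K^{2d}$ without the factor $a_I$, or, equivalently, showing that whenever $a_I\nmid\tn{lcm}(a_J\mid J<I)$ no integral $K^{2d}$ of the stated shape can have vanishing rational obstruction. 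That reconciliation is where I expect the real work to lie.
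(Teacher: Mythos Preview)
Your approach is exactly the paper's: pass to the basis $N^J=\hat M^J$, use that $s_\omega(N^J)=0$ for $\omega\neq J$ to kill the coefficients $\tilde b_J$ with $\ell(J)\ge d-r+1$, and then note that each surviving $N^J$ already lies in the image of $\pi_{2d}(\MTU(d-r))\to\pi_{2d}(\MU)$ since $d-\ell(J)\ge r$. The only remaining task is to clear denominators, and the paper does precisely the iterative triangular inversion you describe: at each step one replaces a maximal remaining $M^J$ by $\tfrac{1}{a_J}\bigl(N^J-\sum_{L<J}c_L M^L\bigr)$ and absorbs the integer corrections into lower terms.

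The ``hard part'' you isolate---that the top coefficient $\tilde b_I=1/a_I$ forces a denominator $a_I$ which need \emph{not} divide $\tn{lcm}(a_J\mid J<I)$---is a genuine observation, and the paper does not in fact get around it. The paper's own argument converts $M^I$ first (the displayed line reads $\tfrac{C}{a_J}N^I$, evidently a misprint for $\tfrac{C}{a_I}N^I$), and its concluding sentence is ``Thus $a_IK^{2d}$ also can be equipped with $r$ complex sections,'' which is not the constant $C=\tn{lcm}(a_J\mid J<I)$ announced in the statement. In other words, the proof as written actually establishes the bound with $a_I$ included, i.e.\ $\tn{lcm}(a_J\mid J\le I)$, and the strict inequality in the theorem statement appears to be a typo. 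Your concrete check is decisive here: for $I=\{d\}$ one has refinements $\{1,d-1\},\{1,1,d-2\},\dots$, and already for $d=3$ one computes $a_{\{3\}}=504=2^3\cdot 3^2\cdot 7$ while $\tn{lcm}(a_{\{1,2\}},a_{\{1,1,1\}})=\tn{lcm}(360,216)=1080$ is not divisible by $7$. So no amount of extra work along the lines you sketch will push the denominator down to the stated lcm in general; the fix is to the statement, not the proof. With $J\le I$ in the lcm, your write-up is complete and matches the paper.
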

\begin{proof}
	Call $C=\tn{lcm}(a_J\mid J<I)$.	By Proposition \ref{goodgen}, we can write \[CK^{2d}=\frac{C}{a_J}N^I+C\sum_{J<I} b_J'M^J\]
	for new constants $b_J'$. Choose maximal $J$ with $b_J'\neq 0$. Then we can rewrite $CM^J$ as $\frac{C}{a_J}\sum_{L<J} N^J$. By inducting down on the partial order, we can write 
	\[CK^{2d}=c_IN^I+\sum_{J<I} c_JN^J\]
 	for some integers $c_J$. Note that $N^J$ has $s_J(N^J)\neq 0$ and all other characteristic classes equal to 0. So, in this sum, $c_J$ must be zero for all $J$ of length greater than or equal to $d-r+1$ because we are assuming the rational obstruction vanishes. Note that by construction $N^{2i}$ has $i-1$ complex sections, so $N^J$ has $d-l(J)\geq r$ complex sections. Thus $a_IK^{2d}$ also can be equipped with $r$ complex sections.
\end{proof}

\begin{remark}
	We expect that the Bernoulli numbers are measuring unexpected obstructions to complex sections, by comparing the formula in Proposition $\ref{goodgen}$ with the formula for the primitives in \cite{Segal}. Thus, we conjecture that they relate to the presence of non trivial differentials.
\end{remark}

This result is not sharp, but it allows us to construct cobordism classes containing a manifold with $r$ complex sections for any $r$.

	\newpage

\bibliography{ref}{}
\bibliographystyle{plain}
\end{document}